 \numberwithin{equation}{section}
\newtheorem{thm}{Theorem}[section]
\newtheorem{lem}[thm]{Lemma}
\newtheorem{prop}[thm]{Proposition}
\newtheorem{cor}[thm]{Corollary}
\theoremstyle{definition}
\newtheorem{rem}[thm]{Remark}
\begin{document}%%

\title{A Blow-up Criterion for Two Dimensional
Compressible Viscous Heat-Conductive Flows\thanks{Supported by NSFC
10871175, 10931007, 10901137, Zhejiang Provincial Natural Science
Foundation of China (Z6100217), and SRFDP 20090101120005.} }
\author{Daoyuan Fang\thanks{E-mail: dyf@zju.edu.cn }, {Ruizhao Zi
\thanks{E-mail: ruizhao3805@163.com} and {Ting Zhang\thanks{E-mail: zhangting79@zju.edu.cn } }
}\\
\textit{\small Department of Mathematics, Zhejiang University,
Hangzhou 310027, China} }
\date{}
\maketitle

\begin{abstract}
We establish a blow-up criterion in terms of the upper bound of the
density and temperature for the strong solution to 2D compressible
viscous heat-conductive flows. The initial vacuum is allowed.
\end{abstract}
\section{Introduction}
This paper is concerned with a blow-up criterion for the two
dimensional compressible viscous heat-conductive flows in $(0,
T)\times\Omega$
\begin{eqnarray}\label{1.1}
\left\{ \begin{array}{ll}
\rho_t+\textrm{div}(\rho u)=0,\\[3mm]
(\rho u)_t+\textrm{div}(\rho u\otimes u)+\nabla
P(\rho,\theta)=\mu\Delta u+(\lambda+\mu)\nabla \textrm{div} u,\\[3mm]
c_{\nu}((\rho\theta)_t+\textrm{div}(\rho\theta
u))-\kappa\Delta\theta+P(\rho,\theta)\textrm{div}u=\frac{\mu}{2}|\nabla
u+\nabla u^{T}|^2+\lambda(\textrm{div} u)^2,
 \end{array}
 \right.
\end{eqnarray}
together with the initial-boundary conditions
\begin{eqnarray}
\label{1.2}&&(\rho, u, \theta)=(\rho_0, u_0, \theta_0)\ \ \textrm{in}\ \ \Omega,\\
\label{1.3}&&(u, \theta)=(0,0)\ \ \textrm{on}\ \
(0,T)\times\partial\Omega.
\end{eqnarray}
Here $\Omega$ is a bounded smooth domain in $\mathbb{R}^2$, and
$(\rho, u, \theta)$ are the density, velocity and temperature of the
fluid, respectively. For a perfect gas, the pressure is given by
\begin{equation}\label{1.5}
P(\rho,\theta)=R\rho\theta,
\end{equation}
where $R>0$ is a generic gas constant. The viscous coefficients
$\mu$ and $\lambda$ are constants satisfying
\begin{equation}\label{1.6}
\mu>0, \quad \lambda+\mu\geq0.
\end{equation}
Finally, $c_{\nu}>0$ and $\kappa>0$ are the specific heat at a
constant volume  and thermal conductivity coefficient, respectively.

Some of the previous works in this direction can be summarized as
follows. In the absence of vacuum, Matsumura and Nishida
\cite{Matsumura} proved the global well-posedness of the classical
solution to (\ref{1.1})-(\ref{1.6}) with the initial data close to
an equilibrium state and Danchin \cite{Danchin1,Danchin2} considered
similar problems in the framework of critical spaces. With the aid
of the effective viscous flux $F$, Hoff \cite{Hoff2} proved the
global existence of weak solutions with less restrictions on the
initial data. If vacuum is taken into account, the problem becomes
more complicated. Recently, Wen and Zhu \cite{Wen} obtained a unique
global classical solution to the 1D model with large initial data
and vacuum. For spherically symmetric flow in the exterior domain
without the origin, the global existence of a strong solution was
obtained by Jiang \cite{Jiang}. To our best knowledge, the first
attempt towards the existence of weak solutions to the full
compressible Navier-Stokes equations in dimension $N \geq2$ is given
by Feireisl \cite{Feireisl2} where he proved the global existence of
the so-called variational solutions in the case of real gases. In
addition, it is worth mentioning that, by using a new mathematical
entropy equality, Bresch and Desjardins \cite{Bresch} got the global
weak solutions to the Navier-Stokes equations for heat conducting
fluids with density and temperature dependent viscosity. Certainly,
for the isentropic case, the results are more satisfactory, see
\cite{Feireisl1,Hoff1,Huang4,Jiang2,Jiang3,Lions} and references
therein.

On the other hand, when the initial density is compactly supported,
Xin \cite{Xin} proved that a smooth solution will blow-up in finite
time in the whole space, see also \cite{Rozanova} for a more general
blow-up result. Thus, it is interesting to investigate the mechanism
of blow-up and the structure of possible singularities. Some
progress for the isentropic flow can be found in
\cite{Huang1,Huang3,Huang5,Sun1,Sun2} and references therein. For
the non-isentropic case, Fan and Jiang \cite{Fan-Jiang1} proved the
following blow-up criteria for the local strong solutions to
(\ref{1.1})-(\ref{1.6}) in the case of two dimensions
\begin{equation*}
\lim_{T\rightarrow T^*} \bigg( \sup_{0\leq t\leq T}
\{\|\rho\|_{L^\infty}, \|\rho^{-1}\|_{L^\infty},
\|\theta\|_{L^\infty}\}+\int_0^T(\|\rho\|_{W^{1,q_0}}+\|\nabla\rho\|_{L^2}^4+
\|u\|_{L^{r,\infty}}^{\frac{2r}{r-2}})dt\bigg)=\infty,
\end{equation*}
or
\begin{equation*}
\lim_{T\rightarrow T^*} \bigg( \sup_{0\leq t\leq T}
\{\|\rho\|_{L^\infty}, \|\rho^{-1}\|_{L^\infty},
\|\theta\|_{L^\infty}\}+\int_0^T(\|\rho\|_{W^{1,q_0}}+\|\nabla\rho\|_{L^2}^4)dt\bigg)=\infty,
\end{equation*}
provided $2\mu>\lambda$, where $T^*<\infty$ is the maximal time of
existence of a strong solution, $q_0>3$ is a certain number, $3 < r
\leq\infty$ with $2/s+3/r = 1$, and $L^{r,\infty}\equiv
L^{r,\infty}(\Omega)$ is the Lorentz space. If the domain is a
periodic or unit square domain in $\mathbb{R}^2$, the blow-up
criterion is refined by Jiang and Ou \cite{Jiang1} to be
\begin{equation}\label{1.7}
\lim_{T\rightarrow T^*} \int_0^T\|\nabla u\|_{L^\infty}dt=\infty,
\end{equation}
which coincides with the Beale-Kato-Majda criterion \cite{B-K-M} for
ideal incompressible flows. For the case that $\Omega$ is a bounded
domain in $\mathbb{R}^3$, Fan, Jiang and Ou \cite{Fan-Jiang2}
established a blow-up criterion with additional upper bound of the
temperature $\theta$
\begin{equation}\label{1.8}
\lim_{T\rightarrow T^*} \bigg( \sup_{0\leq t\leq T}
\|\theta\|_{L^\infty}+\int_0^T\|\nabla
u\|_{L^\infty}dt\bigg)=\infty,
\end{equation}
provided
\begin{equation}\label{1.9}
7\mu>\lambda.
\end{equation}
Very recently, under the same condition (\ref{1.9}), Sun, Wang and
Zhang \cite{Sun3} obtained a blow-up criterion in terms of the upper
bound of $(\rho,\rho^{-1},\theta)$
\begin{equation}\label{1.10}
\lim_{T\rightarrow T^*} \bigg( \sup_{0\leq t\leq T}
\{\|\rho\|_{L^\infty}+\|\rho^{-1}\|_{L^\infty}+\|\theta\|_{L^\infty}\}\bigg)=\infty.
\end{equation}

However, from the physical point of view, it is natural to expect
that the solution does not blow-up under conditions on the
non-appearance of the concentration of the temperature and the
density. For this reason, the aim of the current paper is to remove
the lower bound of the density $\rho$ in (\ref{1.10}). As pointed
out in \cite{Sun3}, without the lower bound of the density $\rho$,
it is difficult to deal with the highly nonlinear terms $|\nabla
u+\nabla u^{\top}|^2, (\textrm{div}u)^2$ in the temperature
equation. To overcome this difficulty, we put together the estimates
of $\sup_{0\leq t\leq T}\|\nabla u\|_{L^2}^2$, $\sup_{0\leq t\leq
T}\|\sqrt{\rho} \dot{u}\|_{L^2}^2+\int_0^T\|\nabla\dot{u}\|_{L^2}^2$
and $\sup_{0\leq t\leq T}\|\nabla
\theta\|_{L^2}^2+\int_0^T\|\rho\dot{\theta}\|_{L^2}^2$ (see Lemma
\ref{lem3.1}-\ref{lem3.3} below) and make full use the good term
$\int_0^T\|\rho\dot{\theta}\|_{L^2}^2$. This is the main ingredient
of our proof. Besides, let us emphasize that, instead of
$\dot{\theta}$, we use $\theta_t$ as the test function in the proof
of Lemma \ref{lem3.3}, which enable us to rewrite
$$
\int_0^t[\frac{\mu}{2}|\nabla u+\nabla
u^{\top}|^2+\lambda(\textrm{div}u)^2]\theta_t
$$
as
$$\frac{d}{dt}\int_0^t[\frac{\mu}{2}|\nabla u+\nabla
u^{\top}|^2+\lambda(\textrm{div}u)^2]\theta+\textrm{other terms},
$$
and thus we can avoid using the lower bound of the density. Finally,
we remark that in the process of our proof, we have to deal with the
troublesome term $\int_0^T\|\nabla u\|_{L^4}^4$. To this end, we use
the decomposition of the velocity $u=v+w$ introduced by Sun, Wang
and Zhang in \cite{Sun1}. More precisely, let $v$ solves the
elliptic system
\begin{equation}\label{4.1}
\begin{cases}
\mu\Delta v+(\lambda+\mu)\nabla\textrm{div}v=\nabla P(\rho,\theta)\
\ \textrm{in}\ \  \Omega,\\
v=0\ \ \textrm{on}\ \ \partial\Omega.
\end{cases}
\end{equation}
then from the momentum equation $(\ref{1.1})_2$ and (\ref{4.1}), it
is easy to see that $w$ solves
\begin{eqnarray}\label{4.2}
\begin{cases}
\mu\Delta w+(\lambda+\mu)\nabla\textrm{div}w=\rho\dot{u}\ \
\textrm{in}\ \ \Omega,\\
w=0\ \ \textrm{on}\ \ \partial\Omega.
\end{cases}
\end{eqnarray}
Here $w$ is an important quantity, whose divergence could be
regarded as the effective viscous flux and thus possesses more
regularity information than $u$. Based on this decomposition, our
approach next is to use the Gagliardo-Nirenberg inequality and then
close the estimates with the help of Gronwall's inequality.
Unfortunately, if the dimension $N=3$, this approach does not work
anymore, and that is why we only consider the two dimensional case.
\bigbreak

We would like to give some notations which will be used throughout
the paper.

\noindent\textbf{Notations:}
\begin{enumerate}
  \item Throughout this paper, we denote by $\Omega$ a bounded smooth domain in $\mathbb{R}^2$ and $L^r=L^r(\Omega),\ \ W^{k,r}=W^{k,r}(\Omega),\ \ H^k=W^{k,2},$\\[1mm]
  $H_0^1=\{v\in H^1(\Omega):\|v\|_{H^{1}}<\infty\
  \textrm{and}\ v|_{\partial\Omega}=0 \   \textrm{in the sence of trace}\}$,
  by virtue of Poincar\'{e}'s inequality, we redefine
  $\|v\|_{H_0^1}=\|\nabla v\|_{L^2}$.
  \item $\dot{f}=f_t+u\cdot\nabla f$ is the material derivative
  of $f$, and $\ddot{f}=(\dot{f})\dot{}$ denotes the twice material derivative
  of $f$.
  \item $\displaystyle\int f=\int_{\Omega}f dx$ and $\displaystyle\int_0^T\int f=\int_0^T\int_{\Omega}f dxdt.$
\end{enumerate}

Before proceeding any further, we recall that Cho and Kim
\cite{Cho-Kim} obtained the local strong solution to
(\ref{1.1})-(\ref{1.6}) with initial vacuum for the spacial
dimension $N=3$ (and $\Omega$ need not be bounded). Now if $\Omega$
is a bounded smooth domain in $\mathbb{R}^2$ with $(u,\theta)$
satisfying the Dirichlet boundary condition (\ref{1.3}), it is not
difficult to verify that Cho and Kim's proof in \cite{Cho-Kim} still
works, and we state the corresponding result below.
\begin{prop}\label{prop1}
Let $q\in(2,\infty)$ be a fixed constant. Assume that the initial
data satisfy
\begin{eqnarray}\label{pro1}
\nonumber&\rho_0\geq0,\quad \rho_0\in W^{1, q},\ \
 (u_0,\theta_0)\in H_0^1\cap H^2, &
\end{eqnarray}
and the compatibility conditions
\begin{eqnarray}\label{pro2}
\nonumber&\mu\Delta u_0+(\lambda+\mu)\nabla
\textrm{div}u_0-R\nabla(\rho_0\theta_0)=\rho_0^{\frac{1}{2}}g_1,&\\
&\kappa\Delta\theta_0+\frac{\mu}{2}|\nabla u_0+\nabla
u^{\top}_0|^2+\lambda(\textrm{div}u_0)^2=\rho_0^{\frac{1}{2}}g_2,
\end{eqnarray}
for some $(g_1,g_2)\in L^2$. Then there exist a positive constant
$T_0$ and a unique strong solution $(\rho, u, \theta)$ to
(\ref{1.1})-(\ref{1.6}) such that
\begin{eqnarray}\label{pro3}
\nonumber&\rho\geq0,\quad \rho\in C([0,T_0];
W^{1, q}),&\\
\nonumber& (u, \theta)\in C([0,T_0];H_0^1\cap
H^2)\cap L^2(0,T_0;W^{2,q}),&\\
&(u_t,\theta_t)\in L^2([0, T_0];H_0^1),\quad
(\sqrt{\rho}u_t,\sqrt{\rho}\theta_t)\in L^\infty([0,T_0];L^2).&
\end{eqnarray}
\end{prop}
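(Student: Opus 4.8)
The plan is to follow the linearization-and-iteration scheme of Cho and Kim \cite{Cho-Kim}, paying attention only to the points where the two-dimensional bounded-domain setting with the Dirichlet condition (\ref{1.3}) differs from the original three-dimensional argument. First I would regularize the possibly vanishing initial density by $\rho_0^\delta=\rho_0+\delta$ with $\delta\in(0,1)$, solve the problem for each $\delta$, and finally pass to the limit $\delta\to0$. For fixed $\delta$ one defines iterates $(\rho^k,u^k,\theta^k)$ by solving, given $(u^{k-1},\theta^{k-1})$, the linear transport equation $\rho^k_t+\mathrm{div}(\rho^k u^{k-1})=0$ with $\rho^k(0)=\rho_0^\delta$, then the linear Lam\'e-type parabolic system
\begin{equation*}
\rho^k u^k_t+\rho^k u^{k-1}\cdot\nabla u^k+R\nabla(\rho^k\theta^{k-1})=\mu\Delta u^k+(\lambda+\mu)\nabla\mathrm{div}\,u^k,\qquad u^k(0)=u_0,\ \ u^k|_{\partial\Omega}=0,
\end{equation*}
and finally the linear parabolic equation for $\theta^k$ with right-hand side $-R\rho^k\theta^{k-1}\mathrm{div}\,u^{k-1}+\tfrac{\mu}{2}|\nabla u^{k-1}+\nabla(u^{k-1})^{\top}|^2+\lambda(\mathrm{div}\,u^{k-1})^2$ and $\theta^k(0)=\theta_0$, $\theta^k|_{\partial\Omega}=0$. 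Standard linear theory (transport estimates for $\rho^k$, $L^2(0,T;W^{2,q})$ maximal regularity for the two parabolic problems, and energy estimates) shows each iterate lies in the class (\ref{pro3}).

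The heart of the matter is a family of a priori bounds uniform in both $k$ and $\delta$ on a short interval $[0,T_0]$, with $T_0$ depending only on the data in (\ref{pro1})--(\ref{pro2}). These are derived in the usual order: the basic energy estimate (testing the momentum equation with $u^k$ and the temperature equation with $\theta^k$); the first-order estimate (testing the momentum equation with $u^k_t$, then differentiating in time and testing again with $u^k_t$, and likewise with $\theta^k_t$ for the temperature equation); elliptic regularity applied to the two equations, viewed as Lam\'e/Laplace systems with the material-derivative and pressure terms on the right, to upgrade to $u^k,\theta^k\in L^2(0,T_0;W^{2,q})$; and differentiation of the transport equation to bound $\nabla\rho^k$ in $L^\infty(0,T_0;L^q)$ in terms of $\int_0^{T_0}\|\nabla u^{k-1}\|_{L^\infty}\,dt$, which is controlled by the $W^{2,q}$ bound through the embedding $W^{1,q}(\Omega)\hookrightarrow L^\infty(\Omega)$ valid for $q>2$ in two dimensions. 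The crucial point for the vacuum is that the initial quantities $\|\sqrt{\rho_0^\delta}\,u_t(0)\|_{L^2}$ and $\|\sqrt{\rho_0^\delta}\,\theta_t(0)\|_{L^2}$ are estimated, via the equations at $t=0$, directly by the compatibility data $g_1,g_2$, so these bounds do not degenerate as $\delta\to0$. The quadratic heat sources are handled by $\big\||\nabla u|^2\big\|_{L^2}=\|\nabla u\|_{L^4}^2$ together with the Gagliardo--Nirenberg inequality $\|\nabla u\|_{L^4}\lesssim\|\nabla u\|_{L^2}^{1/2}\|u\|_{H^2}^{1/2}+\|\nabla u\|_{L^2}$, and Poincar\'e's inequality on the bounded domain is used freely.

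With the uniform bounds in place, one shows the iteration is contractive in a low-order norm: estimating $\rho^{k+1}-\rho^k$ in $L^\infty(0,T_0;L^2)$ and $u^{k+1}-u^k,\theta^{k+1}-\theta^k$ in $L^\infty(0,T_0;L^2)\cap L^2(0,T_0;H_0^1)$ yields, after possibly shrinking $T_0$, a Cauchy estimate, so the sequence converges; the uniform higher-order bounds and lower semicontinuity place the limit in the class (\ref{pro3}) and one checks it solves (\ref{1.1})--(\ref{1.6}) with $\rho_0^\delta$. The $\delta$-uniform estimates then allow passing to the limit $\delta\to0$ (weak-$*$ convergence of the high-order norms, strong convergence in the low-order norm), producing a solution with the genuine vacuum condition $\rho_0\geq0$. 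Uniqueness follows from a standard difference estimate between two solutions in the same energy class, using $\sqrt{\rho}\,(u_1-u_2)$, $\sqrt{\rho}\,(\theta_1-\theta_2)$ and Gronwall, and the time-continuity assertions in (\ref{pro3}) are recovered from the equations together with the bounds.

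The step I expect to be the main obstacle is the first-order estimate: without a positive lower bound on $\rho$ one must work throughout with the degenerate quantities $\sqrt{\rho}\,u_t$, $\sqrt{\rho}\,\theta_t$ and absorb the terms $\nabla P$, $|\nabla u+\nabla u^{\top}|^2$ and $(\mathrm{div}\,u)^2$ without ever dividing by $\rho$; this is precisely where the compatibility conditions (\ref{pro2}) and the two-dimensional Sobolev embeddings enter, and where the adaptation of \cite{Cho-Kim} to the present setting needs the most care, in particular in controlling the boundary integrals that arise from integrating by parts against $u_t$ and $\theta_t$ under the Dirichlet condition (\ref{1.3}).
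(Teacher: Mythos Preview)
Your proposal is correct and coincides with the paper's own treatment: the paper does not give an independent proof of Proposition~\ref{prop1} but simply observes that the linearization-and-iteration scheme of Cho and Kim \cite{Cho-Kim} for $N=3$ carries over verbatim to the two-dimensional bounded-domain case with the Dirichlet condition (\ref{1.3}), which is exactly the strategy you have sketched. Your outline in fact supplies more detail on the adaptation (the role of the compatibility conditions (\ref{pro2}), the 2D Sobolev embeddings, and the handling of the quadratic source terms) than the paper itself provides.
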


Our main result is stated as follows.
\begin{thm}[\textbf{Blow-up Criterion}]\label{thmBC}
Suppose that the assumptions in Proposition \ref{prop1} are
satisfied, and assume that $(\rho, u, \theta)$ is the strong
solution constructed in Proposition \ref{prop1}. Let $T^*$ be the
maximal existence time. If $T^*<\infty$, then
\begin{equation}\label{bc}
\lim_{T\rightarrow T^*}\sup_{0\leq t\leq
T}(\|\rho\|_{L^\infty}+\|\theta\|_{L^\infty})=\infty.
\end{equation}
\end{thm}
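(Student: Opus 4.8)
The plan is to argue by contradiction: assume $T^*<\infty$ but
\begin{equation*}
\sup_{0\le t\le T^*}\bigl(\|\rho\|_{L^\infty}+\|\theta\|_{L^\infty}\bigr)=:M<\infty,
\end{equation*}
and then derive enough a priori regularity on $(0,T^*)$ to extend the solution past $T^*$ via the local existence result (Proposition \ref{prop1}), contradicting maximality. The whole argument is a bootstrap: starting from the basic energy estimate and the bound $M$, successively control $\sup_t\|\nabla u\|_{L^2}^2$, then $\sup_t\|\sqrt\rho\dot u\|_{L^2}^2+\int\|\nabla\dot u\|_{L^2}^2$, then $\sup_t\|\nabla\theta\|_{L^2}^2+\int\|\rho\dot\theta\|_{L^2}^2$, then $\int\|\nabla u\|_{L^4}^4$, then $\sup_t\|\nabla\rho\|_{L^q}$, and finally the second-order bounds in Proposition \ref{prop1}'s class.

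First I would record the standard energy/entropy estimates (mass and total energy conservation, plus the entropy inequality), which under the bound $M$ give $\sup_t\|\sqrt\rho u\|_{L^2}^2+\int_0^{T^*}\|\nabla u\|_{L^2}^2\,dt\le C$ and $\int_0^{T^*}\|\nabla\sqrt\theta\|_{L^2}^2\,dt\le C$. Next, multiply $(\ref{1.1})_2$ by $u_t$ (or work with $\dot u$) to get the estimate for $\sup_t\|\nabla u\|_{L^2}^2$ and $\int\|\sqrt\rho\dot u\|_{L^2}^2$; here the pressure term is handled by the bound on $\|\rho\theta\|_{L^\infty}\le RM^2$. Then multiply the momentum equation by $\dot u$ (i.e. apply the material derivative and test with $\dot u$) to obtain $\sup_t\|\sqrt\rho\dot u\|_{L^2}^2+\int_0^{T^*}\|\nabla\dot u\|_{L^2}^2\,dt\le C$; the quadratic-in-$\nabla u$ commutator terms are controlled by Gagliardo–Nirenberg in two dimensions together with the elliptic decomposition $u=v+w$ from (\ref{4.1})–(\ref{4.2}), since $\|\nabla w\|_{L^4}$ is governed by $\|\rho\dot u\|_{L^2}$ and $\|\nabla v\|_{W^{1,p}}$ by $\|\nabla P\|_{L^p}\lesssim_M 1$. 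The crucial and most delicate step is the temperature estimate (Lemma \ref{lem3.3}): testing the energy equation with $\theta_t$ rather than $\dot\theta$, rewriting $\int[\tfrac\mu2|\nabla u+\nabla u^\top|^2+\lambda(\mathrm{div}\,u)^2]\theta_t$ as a total time derivative of $\int[\tfrac\mu2|\nabla u+\nabla u^\top|^2+\lambda(\mathrm{div}\,u)^2]\theta$ plus remainder terms that carry a factor $\nabla\dot u$ (absorbed using the already-controlled $\int\|\nabla\dot u\|_{L^2}^2$), one obtains $\sup_t\|\nabla\theta\|_{L^2}^2+\int_0^{T^*}\|\rho\dot\theta\|_{L^2}^2\,dt\le C$ without ever needing a lower bound on $\rho$; the good term $\int\|\rho\dot\theta\|_{L^2}^2$ is then fed back to close the coupling.

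The main obstacle, as the authors flag, is exactly this last circle of estimates: the nonlinear dissipation terms $|\nabla u+\nabla u^\top|^2$ and $(\mathrm{div}\,u)^2$ in the temperature equation are only $L^2_t L^2_x$-type quantities a priori, and without $\|\rho^{-1}\|_{L^\infty}$ one cannot simply move $\theta$ or $\rho\theta$ around; the resolution is the $\theta_t$-test-function trick producing the total-derivative structure, combined with absorbing the leftover terms into $\int\|\nabla\dot u\|_{L^2}^2$ and into small multiples of $\int\|\rho\dot\theta\|_{L^2}^2$ and $\sup_t\|\nabla\theta\|_{L^2}^2$, all closed by Gronwall. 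Once $\sup_t(\|\nabla u\|_{L^2}+\|\nabla\theta\|_{L^2})$ and $\int_0^{T^*}(\|\nabla\dot u\|_{L^2}^2+\|\rho\dot\theta\|_{L^2}^2+\|\nabla u\|_{L^4}^4)\,dt$ are finite, the density gradient bound $\sup_t\|\nabla\rho\|_{L^q}$ follows by differentiating the continuity equation and applying Gronwall with the log-type estimate $\|\nabla u\|_{L^\infty}\lesssim 1+\|\mathrm{div}\,u\|_{L^\infty}\log(e+\|\nabla^2 u\|_{L^q})+\|\nabla u\|_{L^2}$; the second-order bounds $(u,\theta)\in L^\infty_t(H^2)\cap L^2_t(W^{2,q})$ and $(\sqrt\rho u_t,\sqrt\rho\theta_t)\in L^\infty_t L^2$ then come from elliptic regularity applied to $(\ref{1.1})_2$ and $(\ref{1.1})_3$. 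Thus $(\rho,u,\theta)(T^*,\cdot)$ satisfies the hypotheses of Proposition \ref{prop1}, the solution continues beyond $T^*$, and maximality is contradicted, proving (\ref{bc}).
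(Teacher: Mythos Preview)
Your outline captures the right ingredients---the contradiction setup, the $\theta_t$ trick to avoid a lower bound on $\rho$, the decomposition $u=v+w$, and the logarithmic estimate for $\|\nabla\rho\|_{L^q}$---but the order in which you close the estimates is not workable, and this is not a cosmetic point. You claim that testing the momentum equation with $\dot u$ yields $\sup_t\|\sqrt\rho\,\dot u\|_{L^2}^2+\int_0^{T^*}\|\nabla\dot u\|_{L^2}^2\,dt\le C$ outright, with the pressure handled by $\|\rho\theta\|_{L^\infty}\le M^2$. But after applying the material derivative, the pressure contributes $P_t+\textrm{div}(Pu)=R\rho\dot\theta$ (see~(\ref{3.7})), so both the $\dot u$-level estimates (Lemmas~\ref{lem3.1} and~\ref{lem3.2} in the paper) carry an unavoidable $\int_0^t\!\!\int\rho|\dot\theta|^2$ on the right. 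You therefore cannot bound $\int\|\nabla\dot u\|_{L^2}^2$ \emph{before} the temperature estimate; the three inequalities for $\|\nabla u\|_{L^2}^2$, $\|\sqrt\rho\,\dot u\|_{L^2}^2+\int\|\nabla\dot u\|_{L^2}^2$, and $\|\nabla\theta\|_{L^2}^2+\int\rho\dot\theta^2$ have to be added together and closed simultaneously by a single Gronwall argument (this is exactly the content of Lemma~\ref{lem3.3}, and the introduction stresses it as the main novelty). Your later remark that ``the good term $\int\|\rho\dot\theta\|_{L^2}^2$ is then fed back to close the coupling'' is the correct instinct, but it contradicts the sequential plan you wrote out above it.

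A second, smaller error: you write that $\|\nabla v\|_{W^{1,p}}$ is controlled by $\|\nabla P\|_{L^p}\lesssim_M 1$. At this stage $\nabla P=R(\theta\nabla\rho+\rho\nabla\theta)$ is \emph{not} bounded, since $\|\nabla\rho\|_{L^p}$ only comes at the very end. What the paper actually uses for the $\int\|\nabla u\|_{L^4}^4$ term (see~(\ref{3.27})) is the divergence-form elliptic estimate $\|\nabla v\|_{L^4}\le C\|P\|_{L^4}\lesssim_M 1$ from Lemma~\ref{lem2.0}(2), together with $\|\nabla w\|_{L^4}^2\le C\|\nabla w\|_{L^2}\|\nabla w\|_{H^1}\le C\|\nabla u\|_{L^2}\|\sqrt\rho\,\dot u\|_{L^2}$; these bounds feed into the same combined Gronwall, not into an independent prior step.
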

\begin{rem}
If either the boundary condition (\ref{1.3}) on $\theta$ is replaced
by the Neumann boundary condition $\frac{\partial\theta}{\partial
n}|_{\partial\Omega}=0$ or the bounded domain $\Omega$ is replaced
by the whole space $\mathbb{R}^2$, we are able to obtain the same
blow-up criterion as (\ref{bc}) provided the corresponding local
strong solution exists as in Proposition \ref{prop1}. In these
cases, we can not use Poincar\'{e}'s inequality directly to get the
lower order estimates of $u$ and $\theta$, however, these would be
recovered by using Lions's (for $\Omega$ bounded) and Hoff's (for
$\Omega=\mathbb{R}^2$) technics, and the readers are referred to
\cite{Lions} Remark 5.1 and \cite{Hoff3} Lemma 2.3, for instance.
\end{rem}
\section{Preliminaries}
Obviously, the appearance of the systems (\ref{4.1}) and (\ref{4.2})
make the estimates on the strongly elliptic operator
$\mu\Delta+(\lambda+\mu)\nabla\textrm{div}$ necessary. Now let $U$
solves
\begin{equation}\label{2.7}
\begin{cases}
\mu\Delta U+(\lambda+\mu)\nabla\textrm{div}U=F\ \ \textrm{in}\ \
\Omega,\\
U=0\ \ \textrm{on}\ \ \partial\Omega.
\end{cases}
\end{equation}
First of all, we state some classical estimates for the above
strongly elliptic systems, which will be used later frequently .
\begin{lem}[\cite{Sun1}]\label{lem2.0}Let $p\in(1,\infty)$ and $U$ be a solution
of (\ref{2.7}), then there exists a constant $C$ depending only on
$\lambda, \mu, p$ and $\Omega$ such that the following estimates
hold:

\noindent(1) if $F\in L^p$, then
\begin{equation}\label{2.01}
\|U\|_{W^{2,p}}\leq C\|F\|_{L^p},
\end{equation}

\noindent(2) if $F=\textrm{div} f$ with $f=(f_{i,j})_{2\times2},
f_{ij}\in L^p$, then
\begin{equation}\label{2.02}
\|U\|_{W^{1,p}}\leq C\|f\|_{L^p}.
\end{equation}
\end{lem}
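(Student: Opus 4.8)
The operator $LU := \mu\Delta U + (\lambda+\mu)\nabla\textrm{div}\,U$ is a constant-coefficient second order system, so the plan is to place (\ref{2.7}) inside the Agmon--Douglis--Nirenberg (ADN) $L^p$-theory for elliptic systems and read off the two estimates. First I would record that, under (\ref{1.6}), the principal symbol $M(\xi)=\mu|\xi|^2 I+(\lambda+\mu)\,\xi\otimes\xi$ satisfies $\eta^{\top}M(\xi)\eta=\mu|\xi|^2|\eta|^2+(\lambda+\mu)(\xi\cdot\eta)^2\ge\mu|\xi|^2|\eta|^2$, so $M(\xi)$ is positive definite for $\xi\neq0$ and $L$ is strongly elliptic. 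Its eigenvalues are $(\lambda+2\mu)|\xi|^2$ and $\mu|\xi|^2$, whence $\det M(\xi)=\mu(\lambda+2\mu)|\xi|^4$; since in two dimensions, along any complex line, this has exactly two roots (counted with multiplicity) in the upper half plane, $L$ is properly elliptic, and the Dirichlet condition $U|_{\partial\Omega}=0$ is well known to satisfy the complementing (Lopatinski--Shapiro) condition for $L$. This is precisely the hypothesis needed to invoke ADN.

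For part (1) I would then quote the ADN a priori estimate $\|U\|_{W^{2,p}}\le C(\|F\|_{L^p}+\|U\|_{L^p})$, with $C=C(\lambda,\mu,p,\Omega)$, and remove the lower-order term $\|U\|_{L^p}$. The removal rests on uniqueness: testing (\ref{2.7}) with $U$ and integrating by parts gives $\int[\mu|\nabla U|^2+(\lambda+\mu)(\textrm{div}\,U)^2]=-\int F\cdot U$, so $F=0$ forces $\nabla U=0$ and hence $U=0$ by the boundary condition. Combining this uniqueness with the compactness of $W^{2,p}\hookrightarrow\hookrightarrow W^{1,p}$ in a standard contradiction argument (or, equivalently, via the Fredholm alternative) lets me absorb $C\|U\|_{L^p}$ and conclude $\|U\|_{W^{2,p}}\le C\|F\|_{L^p}$.

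For part (2), $F=\textrm{div}\,f$ with $f\in L^p$ has the natural regularity $F\in W^{-1,p}$ with $\|F\|_{W^{-1,p}}\le\|f\|_{L^p}$, and the claim $\|U\|_{W^{1,p}}\le C\|f\|_{L^p}$ is exactly the statement that $L$ is an isomorphism $W^{1,p}_0\to W^{-1,p}$, i.e. the ADN estimate shifted down by one derivative, which lies on the same scale. The $L^p$ part of the bound is transparent by duality from part (1): $L$ is formally self-adjoint, so for $g\in L^{p'}$ I solve $L\Phi=g$, $\Phi|_{\partial\Omega}=0$, obtaining $\|\Phi\|_{W^{2,p'}}\le C\|g\|_{L^{p'}}$ by part (1); then
\[
\int U\cdot g=\int U\cdot L\Phi=\int \Phi\cdot LU=\int \Phi\cdot\textrm{div}\,f=-\int f:\nabla\Phi ,
\]
so that $|\int U\cdot g|\le\|f\|_{L^p}\|\Phi\|_{W^{2,p'}}\le C\|f\|_{L^p}\|g\|_{L^{p'}}$, and taking the supremum over $\|g\|_{L^{p'}}\le1$ gives $\|U\|_{L^p}\le C\|f\|_{L^p}$. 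The gradient part is the content of the isomorphism $L\colon W^{1,p}_0\to W^{-1,p}$, packaged with the same ADN theory (equivalently, the duality above applied one derivative higher against the corresponding isomorphism for the exponent $p'$).

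Finally, the real content---and the step I expect to be the main obstacle---is the $L^p$ theory itself: passing from the elementary energy ($p=2$) identity above to Calder\'on--Zygmund-type bounds for every $p\in(1,\infty)$, which is where ADN does the heavy lifting. Two secondary points deserve care: in two dimensions proper ellipticity must be checked separately (it is not automatic from strong ellipticity, unlike in dimension $\ge3$), and estimate (2) sits at negative regularity, so it genuinely requires the scale-invariant theory rather than a naive duality off of (1) alone.
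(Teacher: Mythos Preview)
The paper does not prove this lemma; it is quoted from \cite{Sun1} as a classical $L^p$ estimate for the Lam\'e system with Dirichlet data, so there is no in-paper argument to compare against. Your outline via Agmon--Douglis--Nirenberg theory is the standard route and is correct: strong ellipticity of $\mu\Delta+(\lambda+\mu)\nabla\textrm{div}$ under (\ref{1.6}), verification of proper ellipticity in two dimensions through the characteristic determinant $\mu(\lambda+2\mu)|\xi|^4$, the complementing condition for Dirichlet data, the ADN a~priori bound plus uniqueness/Fredholm to drop the lower-order term for (1), and the shifted scale $W^{1,p}_0\to W^{-1,p}$ (or the duality you sketch) for (2). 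Your caveats at the end are well placed; nothing further is needed here.
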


As in \cite{Sun1,Sun2,Sun3}, we also need an endpoint estimate for
the strongly elliptic operator
$\mu\Delta+(\lambda+\mu)\nabla\textrm{div}$ for the case $p=\infty$.
This will be done with the aid of the John-Nirenberg space of
bounded mean oscillation whose norm is defined by
\begin{equation*}
\|f\|_{BMO(\Omega)}:=\|f\|_{L^2(\Omega)}+[f]_{BMO(\Omega)},
\end{equation*}
with
\begin{eqnarray*}
&&[f]_{BMO(\Omega)}:=\sup_{x\in \Omega,\
r\in(0,d)}\frac{1}{|\Omega_r(x)|}\int_{\Omega_r(x)}|f(y)-f_{\Omega_r(x)}|dy,\\
&&f_{\Omega_r(x)}=\frac{1}{|\Omega_r(x)|}\int_{\Omega_r(x)}f(y)dy,
\end{eqnarray*}
where $\Omega_r(x)=B_r(x)\cap\Omega$, $B_r(x)$ is a ball with center
$x$ and radius $r$, $d$ is the diameter of $\Omega$ and
$|\Omega_r(x)|$ denotes the Lebesgue measure of $\Omega_r(x)$. We
have
\begin{lem}[\cite{Acquistapace}]\label{lem2.3}
If $F=\textrm{div}f$ with $f=(f_{ij})_{2\times2}$, $f_{ij}\in
L^\infty\cap L^2$, then $\nabla U\in BMO(\Omega)$ and there exists a
constant $C$ depending only on $\lambda, \mu$ such that
\begin{equation}\label{2.8}
\|\nabla U\|_{BMO(\Omega)}\leq C(\|f\|_{L^\infty}+\|f\|_{L^2}).
\end{equation}
\end{lem}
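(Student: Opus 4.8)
The plan is to control the two constituents of the $BMO(\Omega)$ norm separately. The $L^2$ part is immediate: applying the $W^{1,2}$ bound \eqref{2.02} (i.e. Lemma \ref{lem2.0} with $p=2$) to the system \eqref{2.7} gives $\|\nabla U\|_{L^2}\le C\|f\|_{L^2}$. It therefore remains to bound the oscillation seminorm $[\nabla U]_{BMO(\Omega)}$ by $C\|f\|_{L^\infty}$, and for this I would run a Calder\'on--Zygmund localization adapted to the Green operator of the Lam\'e system $\mu\Delta+(\lambda+\mu)\nabla\,\mathrm{div}$.

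Fix $x\in\Omega$ and $r\in(0,d)$, write $B=B_r(x)$, and split $f=f\,\mathbf 1_{\Omega_{2r}(x)}+f\,\mathbf 1_{\Omega\setminus\Omega_{2r}(x)}=:f_1+f_2$. By linearity of \eqref{2.7} the solution splits as $U=U_1+U_2$, where $U_i$ solves \eqref{2.7} with datum $\mathrm{div}\,f_i$. Since $\Omega$ is smooth we have the measure bounds $c\,r^2\le|\Omega_r(x)|\le|\Omega_{2r}(x)|\le C\,r^2$, so $\|f_1\|_{L^2}\le C\,r\,\|f\|_{L^\infty}$; the bound \eqref{2.02} with $p=2$ then gives $\|\nabla U_1\|_{L^2}\le C\,r\,\|f\|_{L^\infty}$, and Cauchy--Schwarz yields the mean estimate $\frac{1}{|\Omega_r(x)|}\int_{\Omega_r(x)}|\nabla U_1|\le|\Omega_r(x)|^{-1/2}\|\nabla U_1\|_{L^2}\le C\,\|f\|_{L^\infty}$, which controls the contribution of the local piece to the oscillation.

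For the far piece, the key point is that $\nabla U_2$ is a singular integral against the Dirichlet Green matrix $G(y,z)$ of \eqref{2.7}: writing $U_2(y)=\int_\Omega G(y,z)\,\mathrm{div}_z f_2(z)\,dz$ and integrating by parts, the boundary terms vanish because $G$ vanishes on $\partial\Omega$ (in both arguments, by symmetry of the Lam\'e operator), so $\nabla U_2(y)=\int_\Omega K(y,z)f_2(z)\,dz$ with $K=\nabla_y\nabla_z G$. In dimension $N=2$ the fundamental (Kelvin) matrix is logarithmic, whence $K$ is homogeneous of degree $-2$, smooth off the diagonal, and satisfies $|\nabla_y K(y,z)|\le C|y-z|^{-3}$. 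As $f_2$ is supported outside $\Omega_{2r}(x)$, the mean value theorem gives, for $y,y'\in B$, the bound $|\nabla U_2(y)-\nabla U_2(y')|\le C\,r\,\|f\|_{L^\infty}\int_{|z-x|\ge 2r}|z-x|^{-3}\,dz\le C\,\|f\|_{L^\infty}$, so the oscillation of $\nabla U_2$ over $\Omega_r(x)$ is also $\le C\|f\|_{L^\infty}$. Summing the two pieces bounds $[\nabla U]_{BMO(\Omega)}$, and combining with the $L^2$ estimate gives \eqref{2.8}.

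The main obstacle is establishing the Calder\'on--Zygmund kernel bounds for $K=\nabla_y\nabla_z G$ \emph{uniformly up to and across the boundary}: in the interior this is just the constant-coefficient whole-space theory for the Kelvin matrix, but near $\partial\Omega$ the regular part of $G$, which solves the homogeneous Lam\'e system with boundary data $-G_{\mathrm{free}}|_{\partial\Omega}$, must be shown to obey the same size and smoothness estimates uniformly as the pole tends to the boundary. I would obtain these through a partition of unity subordinate to a cover of $\overline\Omega$ by interior balls and boundary charts; on each boundary chart I flatten $\partial\Omega$ and invoke the half-space Lam\'e Green matrix together with Agmon--Douglis--Nirenberg boundary estimates for Dirichlet data, the lower-order commutator terms generated by the cutoffs being harmless and absorbable into the $L^2$ contribution. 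Once these kernel estimates are in hand, the localization above---equivalently, the fact that a Calder\'on--Zygmund operator maps $L^\infty$ into $BMO$---delivers the stated inequality \eqref{2.8}.
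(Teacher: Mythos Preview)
The paper does not supply a proof of this lemma: it is quoted from \cite{Acquistapace}, and Remark~\ref{rem2.5} explicitly refers the reader there for the bounded-domain case (and to \cite{Sun2} for the whole-space analogue). So there is no ``paper's own proof'' to compare against.

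Your sketch is the standard Calder\'on--Zygmund route and is essentially what underlies the result in \cite{Acquistapace}: the $L^2$ part comes for free from Lemma~\ref{lem2.0}, and the oscillation seminorm is handled by a near/far split with the far piece controlled through the Green matrix of the Lam\'e operator. You have correctly isolated the genuine work, namely the uniform-up-to-the-boundary bounds $|\nabla_y\nabla_z G(y,z)|\le C|y-z|^{-2}$ and $|\nabla_y^2\nabla_z G(y,z)|\le C|y-z|^{-3}$. These are exactly what \cite{Acquistapace} establishes (in a more general variable-coefficient setting) via boundary flattening and Agmon--Douglis--Nirenberg theory, so your plan for that step is on target, even if the details would take real space to write out. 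One small wrinkle worth noting: for balls $\Omega_r(x)$ with $x$ near $\partial\Omega$, your lower bound $|\Omega_r(x)|\ge c\,r^2$ relies on the smoothness (or at least Lipschitz regularity) of $\partial\Omega$, which is assumed throughout the paper, so this is fine.
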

Next, we state a variant of the Brezis-Waigner inequality
\cite{Wainger} which, together with Lemma \ref{lem2.3}, will be used
to give the gradient estimate of $\rho$.
\begin{lem}[\cite{Sun1}]\label{lem2.4}
Let $\Omega$ be a bounded Lipschitz domain in $\mathbb{R}^2$ and
$f\in W^{1, q}$ with $q\in(2,\infty)$, there exists a constant $C$
depending on $q$ such that
\begin{equation}\label{2.9}
\|f\|_{L^\infty}\leq C(1+\|f\|_{BMO}\ln(e+\| f\|_{W^{1, q}})).
\end{equation}
\end{lem}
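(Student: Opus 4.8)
The plan is to read (\ref{2.9}) as a logarithmic interpolation inequality between the two scales of information carried by $\|f\|_{BMO}$, which controls the mean oscillation of $f$ uniformly across all dyadic scales, and $\|f\|_{W^{1,q}}$ with $q>2$, which by Morrey's embedding forces $f$ to be Hölder continuous up to the boundary. Fix $x\in\overline{\Omega}$ and write $d$ for the diameter of $\Omega$. Since $B_d(x)\supseteq\Omega$, we have $\Omega_d(x)=\Omega$, so for any radius $r\in(0,d)$ I would split
\begin{equation*}
|f(x)|\le |f(x)-f_{\Omega_r(x)}|+|f_{\Omega_r(x)}-f_{\Omega}|+|f_{\Omega}|,
\end{equation*}
and estimate the three pieces separately, the middle one by a scale-by-scale telescoping that accounts for the logarithmic factor.

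For the first piece, Morrey's inequality on the Lipschitz domain $\Omega$ (via an extension operator) gives $f\in C^{0,\alpha}(\overline{\Omega})$ with $\alpha=1-2/q$ and $[f]_{C^{0,\alpha}}\le C\|f\|_{W^{1,q}}$; since every $y\in\Omega_r(x)$ satisfies $|x-y|\le r$, this yields
\begin{equation*}
|f(x)-f_{\Omega_r(x)}|\le \frac{1}{|\Omega_r(x)|}\int_{\Omega_r(x)}|f(x)-f(y)|\,dy\le C r^{\alpha}\|f\|_{W^{1,q}}.
\end{equation*}
For the last piece, $|f_{\Omega}|\le|\Omega|^{-1}\|f\|_{L^1}\le C\|f\|_{L^2}\le C\|f\|_{BMO}$, using the $L^2$ term built into the $BMO$ norm.

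The middle piece is handled by telescoping along dyadic radii $r_k=2^kr$, with the top radius capped at $d$. For nested truncated balls $\Omega_{r_k}(x)\subseteq\Omega_{r_{k+1}}(x)$,
\begin{equation*}
|f_{\Omega_{r_k}(x)}-f_{\Omega_{r_{k+1}}(x)}|\le\frac{|\Omega_{r_{k+1}}(x)|}{|\Omega_{r_k}(x)|}\cdot\frac{1}{|\Omega_{r_{k+1}}(x)|}\int_{\Omega_{r_{k+1}}(x)}|f-f_{\Omega_{r_{k+1}}(x)}|\le C[f]_{BMO},
\end{equation*}
where the bounded ratio $|\Omega_{r_{k+1}}(x)|/|\Omega_{r_k}(x)|\le C$ comes from the measure-density estimate $cr^2\le|\Omega_r(x)|\le\pi r^2$ guaranteed by the Lipschitz regularity of $\partial\Omega$. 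Summing over the $N\le C(1+\log_2(d/r))$ scales needed to pass from $r$ up to $d$ gives $|f_{\Omega_r(x)}-f_{\Omega}|\le C(1+\log(d/r))[f]_{BMO}$.

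Combining the three bounds and optimizing in $r$ finishes the argument: choosing $r=\min\{d,\|f\|_{W^{1,q}}^{-1/\alpha}\}$ makes $r^{\alpha}\|f\|_{W^{1,q}}\le1$ while $\log(d/r)\le C\log(e+\|f\|_{W^{1,q}})$, so that
\begin{equation*}
|f(x)|\le C+C\|f\|_{BMO}\log(e+\|f\|_{W^{1,q}})\le C\big(1+\|f\|_{BMO}\ln(e+\|f\|_{W^{1,q}})\big),
\end{equation*}
and taking the supremum over $x\in\overline{\Omega}$ yields (\ref{2.9}). The main obstacle is purely geometric: near $\partial\Omega$ the sets $\Omega_r(x)$ are only truncated balls, so both Morrey's inequality and the telescoping step require the uniform lower bound $|\Omega_r(x)|\ge cr^2$. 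This is exactly where the Lipschitz hypothesis is used, and it is what allows the interior argument to go through unchanged up to the boundary.
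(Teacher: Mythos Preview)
Your argument is correct and follows the standard route to Brezis--Wainger type inequalities: Morrey at small scales, a dyadic telescoping chain controlled by $[f]_{BMO}$ across the intermediate scales, the $L^2$ part of the $BMO$ norm for the global average, and then optimization in $r$. The geometric input you identify --- the measure-density bound $|\Omega_r(x)|\ge cr^2$ for Lipschitz domains --- is exactly what is needed to make the telescoping ratios uniformly bounded near the boundary.

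As for comparison with the paper: there is nothing to compare. The paper does not prove Lemma~\ref{lem2.4}; it quotes the result from \cite{Sun1} and, in Remark~\ref{rem2.5}, traces its lineage back to \cite{Kozono} on $\mathbb{R}^3$ and to \cite{Sun1,Sun2} for bounded domains. Your self-contained proof is precisely the argument underlying those references, so you have in effect supplied what the paper outsources.
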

\begin{rem}\label{rem2.5}
Lemma \ref{lem2.3} and \ref{lem2.4} are the two dimensional version
of the corresponding lemmas in \cite{Sun2}. To our knowledge, when
the domain $\Omega$ is bounded, the estimate (\ref{2.8}) can be
found in \cite{Acquistapace} for a more general setting, and for the
case $\Omega=\mathbb{R}^3$, Sun, Wang and Zhang  gave a proof in
\cite{Sun2}. The Brezis-Waigner type inequality(\ref{2.9}) was first
established in \cite{Kozono} on $\mathbb{R}^3$ and the case $\Omega$
be a $\textrm{bounded domain in}\ \mathbb{R}^N ( N=2,3)$ can be
found in \cite{Sun1,Sun2}.
\end{rem}

\section{Regularity of the velocity and temperature}
Let $0<T<T^*$ be arbitrary but fixed. In what follows, we assume
that $(\rho, u, \theta)$ is a strong solution of
(\ref{1.1})-(\ref{1.6}) on $[0,T]\times\Omega$ with the regularity
stated in Proposition \ref{prop1}. Suppose that $T^*<\infty$. We
will prove Theorem \ref{thmBC} by a contradiction argument. To this
end, we suppose that for any $T< T^*$,
\begin{equation}\label{3.00}
\sup_{0\leq t\leq T}(\|\rho\|_{L^\infty}+\|\theta\|_{L^\infty})\leq
M,
\end{equation}
where $M$ is independent of $T$. We will deduce a contradiction to
the maximality of $T^*$. Hereinafter, we denote by $C$ a general
positive constant which may depend on the initial data, the domain
$\Omega$, $M$ in (\ref{3.00}) and the maximal existence time $T^*$.
\begin{lem}\label{lem3.0}Under assumption (\ref{3.00}),  there holds for any $T<T^*$
\begin{equation}\label{3.0}
\sup_{0\leq t\leq
T}\int(\rho|u|^2+\rho|\theta|^2)+\int_0^T\int(|\nabla u|^2+|\nabla
\theta|^2)\leq C.
\end{equation}
\end{lem}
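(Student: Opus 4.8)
The plan is to derive the basic energy estimate \eqref{3.0} from the structure of the system \eqref{1.1}, exploiting the conservation of mass, the momentum balance, and a combination of the internal energy equation with the kinetic energy identity. The key point is that the bound \eqref{3.00} on $\|\rho\|_{L^\infty}$ and $\|\theta\|_{L^\infty}$ lets me control the troublesome source terms $\frac{\mu}{2}|\nabla u+\nabla u^{\top}|^2+\lambda(\mathrm{div}u)^2$ and $P(\rho,\theta)\mathrm{div}u$ after integrating by parts and using the viscosity condition \eqref{1.6}.

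First I would note that mass conservation $\rho_t+\mathrm{div}(\rho u)=0$ together with $\rho\ge 0$ and $\rho_0\in L^1$ gives $\int\rho\,dx=\int\rho_0\,dx$ for all $t$, and more generally that $\int\rho|u|^2$ and $\int\rho|\theta|^2$ evolve only through the transport structure. Next, testing the momentum equation $(\ref{1.1})_2$ with $u$ and integrating over $\Omega$, using $u|_{\partial\Omega}=0$, yields
\begin{equation*}
\frac{d}{dt}\int\frac{1}{2}\rho|u|^2+\mu\int|\nabla u|^2+(\lambda+\mu)\int(\mathrm{div}u)^2=\int P(\rho,\theta)\,\mathrm{div}u,
\end{equation*}
where I have used the continuity equation to handle the convective terms $\frac12\partial_t(\rho|u|^2)$ after integration by parts. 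Since \eqref{1.6} makes the left-hand viscous terms coercive (in fact $\mu\int|\nabla u|^2$ alone suffices, discarding the nonnegative $(\lambda+\mu)(\mathrm{div}u)^2$ term), and since $|P(\rho,\theta)|=R\rho\theta\le RM^2$ by \eqref{3.00} on a bounded domain, the pressure work term is bounded by $C\|\mathrm{div}u\|_{L^2}\le C\|\nabla u\|_{L^2}$; a Young's inequality absorption gives the $u$-part of \eqref{3.0} after integrating in time.

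For the temperature, I would test the internal energy equation $(\ref{1.1})_3$ with $\theta$ (or with $1$ first, to get an $L^1$-type balance, then with $\theta$), giving
\begin{equation*}
\frac{c_\nu}{2}\frac{d}{dt}\int\rho\theta^2+\kappa\int|\nabla\theta|^2=-\int P(\rho,\theta)\,\theta\,\mathrm{div}u+\int\Big(\frac{\mu}{2}|\nabla u+\nabla u^{\top}|^2+\lambda(\mathrm{div}u)^2\Big)\theta,
\end{equation*}
again using continuity to cancel the convective contributions and $\theta|_{\partial\Omega}=0$ to kill the boundary term in $\int\kappa\Delta\theta\,\theta$. The first right-hand term is bounded by $CM^3\|\nabla u\|_{L^2}$, harmless after time integration thanks to the already-controlled $\int_0^T\|\nabla u\|_{L^2}^2$. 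The second term is bounded by $\|\theta\|_{L^\infty}\big(\mu\|\nabla u+\nabla u^\top\|_{L^2}^2+|\lambda|\,\|\mathrm{div}u\|_{L^2}^2\big)\le CM\int|\nabla u|^2$, which is again integrable in time by the first estimate; note it is essential here that we only need $\int_0^T\|\nabla u\|_{L^2}^2\le C$ rather than any pointwise-in-time control, and that the $L^\infty$ bound on $\theta$ — not a lower bound on $\rho$ — is what tames this highly nonlinear term at the level of the basic energy. Integrating in time and combining with the momentum estimate yields \eqref{3.0}.

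The main obstacle, modest at this level, is the bookkeeping of the convective terms: showing that the material-derivative structure $\rho\dot u\cdot u$ and $c_\nu\rho\dot\theta\,\theta$ integrate (via $\rho_t+\mathrm{div}(\rho u)=0$) to exactly $\frac{d}{dt}\int\frac12\rho|u|^2$ and $\frac{c_\nu}{2}\frac{d}{dt}\int\rho\theta^2$ with no leftover terms, which requires the identity $\int(\rho u\cdot\nabla u)\cdot u+\int\partial_t(\rho u)\cdot u=\frac{d}{dt}\int\frac12\rho|u|^2$ and its temperature analogue. All of this is standard once \eqref{3.00} is in force; the only genuine input beyond classical energy methods is the observation that the nonlinear dissipation source in the temperature equation is controlled purely by $\|\theta\|_{L^\infty}$ times the viscous dissipation, which is exactly the mechanism the paper exploits throughout.
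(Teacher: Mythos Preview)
Your argument is correct and is precisely the standard energy method that the paper has in mind: the authors omit the proof entirely, simply citing Lemma~2 of \cite{Sun3}, whose proof proceeds exactly by testing $(\ref{1.1})_2$ with $u$ and the rewritten temperature equation~(\ref{3.17}) with $\theta$, then using the $L^\infty$ bounds on $\rho$ and $\theta$ from (\ref{3.00}) to control $P\,\mathrm{div}\,u$, $P\theta\,\mathrm{div}\,u$, and the viscous heating term $\big(\tfrac{\mu}{2}|\nabla u+\nabla u^{\top}|^2+\lambda(\mathrm{div}\,u)^2\big)\theta$. Nothing is missing.
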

The proof is the same as Lemma 2 in \cite{Sun3}, we omit it here.

\begin{lem}\label{lem3.1}Under assumption (\ref{3.00}),  there holds for any $t<T^*$
\begin{equation}\label{3.1}
\int|\nabla u|^2+\int_0^t\int\rho|\dot{u}|^2\leq
C+C\int_0^t\int\sqrt{\rho}|\dot{\theta}||\nabla
u|+C\int_0^t\int|\nabla u|^4.
\end{equation}
\end{lem}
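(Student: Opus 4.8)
The plan is to use the momentum equation $(\ref{1.1})_2$, rewritten in the form $\rho\dot u + \nabla P = \mu\Delta u + (\lambda+\mu)\nabla\operatorname{div}u$, and to test it against $\dot u = u_t + u\cdot\nabla u$. The left-hand side produces the dissipation term $\int\rho|\dot u|^2$ that we want to control, while the viscous terms on the right, after integration by parts in $\Omega$ (using $u=\dot u = 0$ on $\partial\Omega$, so boundary terms from the Dirichlet condition vanish), yield the key quantity $\frac{d}{dt}\left(\frac{\mu}{2}\int|\nabla u|^2 + \frac{\lambda+\mu}{2}\int(\operatorname{div}u)^2\right)$ plus a collection of commutator-type terms coming from the transport part $u\cdot\nabla u$ of $\dot u$. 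First I would carry out this integration by parts carefully: terms such as $\int \mu\Delta u\cdot(u\cdot\nabla u)$ and $\int(\lambda+\mu)\nabla\operatorname{div}u\cdot(u\cdot\nabla u)$ expand, via the standard identities (integrating by parts once and moving one derivative onto $u\cdot\nabla u$), into the time-derivative of the Dirichlet-type energy plus cubic terms of the schematic form $\int |\nabla u|^3$ and $\int |\nabla u|^2 |\operatorname{div} u|$, all of which are bounded by $C\int|\nabla u|^3$.

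Next I would handle the pressure term $-\int\nabla P\cdot\dot u = \int P\,\operatorname{div}\dot u$. Writing $P = R\rho\theta$ and using the continuity equation $\rho_t = -\operatorname{div}(\rho u)$ together with $\operatorname{div}\dot u = \operatorname{div}u_t + \operatorname{div}(u\cdot\nabla u)$, one integrates by parts in time and space to reorganize this into $\frac{d}{dt}\big(\text{something}\big)$ plus lower-order pieces; alternatively, and more simply, bound $\int P\,\operatorname{div}\dot u$ directly after first noting $\operatorname{div}\dot u = \operatorname{div}u_t + \partial_i u_j\partial_j u_i + u\cdot\nabla\operatorname{div}u$, then integrate the $\operatorname{div}u_t$ contribution by parts in $t$. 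Since $\|\rho\|_{L^\infty}$ and $\|\theta\|_{L^\infty}$ are bounded by $M$ from (\ref{3.00}), every factor of $P$ costs only a constant $C$, and one gets contributions of the form $\int_0^t\int|\nabla u|^2$ (absorbed by Lemma \ref{lem3.0}), $\int_0^t\int \rho|\dot\theta|$-type terms after rewriting $\theta_t = \dot\theta - u\cdot\nabla\theta$ (which, using $\rho\le M$, are $\le C\int_0^t\int\sqrt\rho|\dot\theta|$ up to terms absorbed elsewhere), plus terms $\int_0^t\int|\theta||\nabla u|\,|\nabla u|$ controlled by $C\int_0^t\int|\nabla u|^2$ because $|\theta|\le M$.

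After integrating the identity in time over $[0,t]$, the boundary terms at $t=0$ are finite (bounded by the initial data via Proposition \ref{prop1}), and the whole right-hand side collapses to the claimed form: $C + C\int_0^t\int\sqrt\rho|\dot\theta||\nabla u| + C\int_0^t\int|\nabla u|^4$, where the cubic terms $\int|\nabla u|^3$ are absorbed using Young's inequality $|\nabla u|^3 \le \varepsilon|\nabla u|^4 + C_\varepsilon|\nabla u|^2$ and then Lemma \ref{lem3.0}. I expect the main obstacle to be bookkeeping rather than any deep estimate: one must track precisely which terms become exact time-derivatives (so that they contribute the clean $\int|\nabla u|^2$ on the left) versus which stay as spacetime integrals, and in particular handle the pressure/temperature coupling so that the only surviving "bad" temperature term is exactly $\int_0^t\int\sqrt\rho|\dot\theta||\nabla u|$ — this is the term that the subsequent lemmas (\ref{lem3.2}, \ref{lem3.3}) are designed to close against using the good term $\int_0^t\|\rho\dot\theta\|_{L^2}^2$. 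The cubic velocity term must be kept as $\int_0^t\int|\nabla u|^4$ (after Young) rather than absorbed, since controlling it requires the $u=v+w$ decomposition introduced later.
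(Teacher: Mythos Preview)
Your proposal is correct and follows essentially the same approach as the paper: test the momentum equation against $\dot u$, integrate the viscous terms by parts to extract $\frac{d}{dt}\int(\mu|\nabla u|^2+(\lambda+\mu)(\operatorname{div}u)^2)$ plus cubic remainders, handle the pressure term by pulling out a time derivative $\frac{d}{dt}\int P\operatorname{div}u$, and then use Young's inequality on $|\nabla u|^3$ together with Lemma~\ref{lem3.0}. The one place where the paper is cleaner than your sketch is the pressure bookkeeping: rather than expanding $P_t$ via $\rho_t$ and $\theta_t$ separately, the paper uses the single identity $P_t+\operatorname{div}(Pu)=R\rho\dot\theta$ (an immediate consequence of the continuity equation and $P=R\rho\theta$), which produces the term $-R\int\rho\dot\theta\,\operatorname{div}u$ directly and avoids any stray $\nabla\rho$ or $\nabla\theta$ factors; this is exactly the term you want, bounded by $C\int\sqrt\rho|\dot\theta||\nabla u|$ via $\rho\le M$.
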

\begin{proof}
First of all, the momentum equation can be rewritten as
\begin{equation}\label{3.2}
\rho\dot{u}+\nabla P=\mu\Delta u+(\lambda+\mu)\nabla\textrm{div}u.
\end{equation}
Taking the $L^2$ inner product of the above equation with $\dot{u}$,
and integrating by parts, we get
\begin{equation}\label{3.3}
\int\rho|\dot{u}|^2=\int P\textrm{div}\dot{u}+\int[\mu\Delta
u+(\lambda+\mu)\nabla\textrm{div}u]\dot{u}.
\end{equation}
Direct calculation yields
\begin{eqnarray}\label{3.4}
\int P\textrm{div}\dot{u}&\nonumber=&\frac{d}{dt}\int P\textrm{div}u-\int(P_t+\textrm{div}(Pu))\textrm{div}u\\
&&+\int P\nabla u:\nabla u^{\top},
\end{eqnarray}
and
\begin{eqnarray}\label{3.5}
&\nonumber&\int[\mu\Delta
u+(\lambda+\mu)\nabla\textrm{div}u]\dot{u}\\
&\nonumber=&-\frac{1}{2}\frac{d}{dt}\int[\mu|\nabla
u|^2+(\lambda+\mu)(\textrm{div}u)^2]-\mu\int\nabla u:(\nabla u\nabla
u)\\
&&+\frac{\mu}{2}\int\textrm{div}u|\nabla
u|^2-(\lambda+\mu)\int\textrm{div}u\nabla u:\nabla
u^{\top}+\frac{\lambda+\mu}{2}\int(\textrm{div}u)^3.
\end{eqnarray}
Substituting (\ref{3.4}) and (\ref{3.5}) into (\ref{3.3}), we have
\begin{eqnarray}\label{3.6}
\nonumber&&\int\rho|\dot{u}|^2+\frac{1}{2}\frac{d}{dt}\int[\mu|\nabla
u|^2+(\lambda+\mu)(\textrm{div}u)^2]\\
\nonumber&=&\frac{d}{dt}\int
P\textrm{div}u-\int(P_t+\textrm{div}(Pu))\textrm{div}u+\int P\nabla
u:\nabla u^{\top}\\
\nonumber&&-\mu\int\nabla u:(\nabla u\nabla
u)+\frac{\mu}{2}\int\textrm{div}u|\nabla
u|^2\\
&&-(\lambda+\mu)\int\textrm{div}u\nabla u:\nabla u^{\top}
+\frac{\lambda+\mu}{2}\int(\textrm{div}u)^3.
\end{eqnarray}
Notice that
\begin{equation}\label{3.7}
P_t+\textrm{div}(Pu)=R\rho\dot{\theta},
\end{equation}
in view of the state equation $P(\rho,\theta)=R\rho\theta$ and
$(\ref{1.1})_1$. Substituting (\ref{3.7}) into (\ref{3.6}), and
integrating the resulting equation over $[0, t]$, we get
\begin{eqnarray}\label{3.8}
\nonumber&&\int_0^t\int\rho|\dot{u}|^2+\frac{1}{2}\int[\mu|\nabla
u|^2+(\lambda+\mu)(\textrm{div}u)^2]\\
\nonumber&\leq&C+\int
P\textrm{div}u+C\int_0^t\int\sqrt{\rho}|\dot{\theta}||\nabla
u|+C\int_0^t\int|\nabla u|^2+C\int_0^t\int|\nabla u|^3\\
&\leq&C_\epsilon+\epsilon\int|\nabla
u|^2+C\int_0^t\int\sqrt{\rho}|\dot{\theta}||\nabla
u|+C\int_0^t\int|\nabla u|^4,
\end{eqnarray}
where we have used Cauchy's inequality, (\ref{3.00}) and
(\ref{3.0}). After the second term on the right hand side of
(\ref{3.8}) absorbed  by the left hand side for a fixed $\epsilon$
small enough, we complete the proof of Lemma \ref{lem3.1}.
\end{proof}

\begin{lem}\label{lem3.2}Under assumption (\ref{3.00}),  there holds for any $t<T^*$
\begin{equation}\label{3.9}
\int\rho |\dot{u}|^2+\int_0^t\int|\nabla\dot{u}|^2\leq
C+C\int_0^t\int\rho|\dot{\theta}|^2+C\int_0^t\int|\nabla u|^4.
\end{equation}
\end{lem}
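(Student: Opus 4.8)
The plan is to estimate $\int\rho|\dot u|^2+\int_0^t\int|\nabla\dot u|^2$ by applying the material derivative operator $\dot{(\ )}=\partial_t+u\cdot\nabla$ to the momentum equation in the form (\ref{3.2}), then testing against $\dot u$. Concretely, writing $L u:=\mu\Delta u+(\lambda+\mu)\nabla\textrm{div}u$, one differentiates $\rho\dot u=Lu-\nabla P$ materially and uses the continuity equation to handle $\dot\rho=-\rho\,\textrm{div}u$, obtaining
\begin{equation*}
\rho\ddot u=(Lu)\dot{}-(\nabla P)\dot{}+\rho\,\textrm{div}u\,\dot u.
\end{equation*}
Pairing with $\dot u$ and integrating over $\Omega$ produces $\frac12\frac{d}{dt}\int\rho|\dot u|^2$ on the left (the $\textrm{div}u$ term from differentiating $\rho\,dx$ under the integral combines cleanly, as in the Hoff/Cho–Kim calculations). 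The point is to commute $\dot{(\ )}$ past $\Delta$ and $\nabla\textrm{div}$: standard commutator identities give, after integration by parts, a leading good term $-\int(\mu|\nabla\dot u|^2+(\lambda+\mu)(\textrm{div}\dot u)^2)$ plus quadratic and cubic error terms in $\nabla u$ times $\nabla\dot u$, and the pressure contribution $(\nabla P)\dot{}=\nabla P_t+\textrm{(convective terms)}$.

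The key steps, in order, are: (i) derive the evolution identity for $\int\rho|\dot u|^2$ as above; (ii) bound the commutator terms of the form $\int\nabla\dot u\,\ast\,\nabla u\,\ast\,\nabla u$ by $\epsilon\|\nabla\dot u\|_{L^2}^2+C\|\nabla u\|_{L^4}^4$ via Cauchy–Schwarz and Young, which is exactly where the $\int_0^t\int|\nabla u|^4$ term on the right of (\ref{3.9}) is born; (iii) handle the pressure term by writing $P_t=R\rho_t\theta+R\rho\theta_t$, using $\rho_t=-\textrm{div}(\rho u)$ and $\rho\theta_t=\rho\dot\theta-\rho u\cdot\nabla\theta$, integrating by parts to move derivatives onto $\dot u$, and absorbing $\|\nabla\dot u\|_{L^2}$ with $\epsilon$ while the remaining factor, using the $L^\infty$ bounds (\ref{3.00}) on $\rho,\theta$ and (\ref{3.0}), generates the $C\int_0^t\int\rho|\dot\theta|^2$ term together with lower-order terms controlled by (\ref{3.0}) and Lemma~\ref{lem3.1}; (iv) integrate in time over $[0,t]$, using Lemma~\ref{lem3.1} to control $\int_0^t\int\rho|\dot u|^2$ and $\sup\|\nabla u\|_{L^2}^2$ by $C+C\int_0^t\int\sqrt\rho|\dot\theta||\nabla u|+C\int_0^t\int|\nabla u|^4$ — and the cross term $\int_0^t\int\sqrt\rho|\dot\theta||\nabla u|$ is split as $\frac12\int_0^t\int\rho|\dot\theta|^2+\frac12\int_0^t\int|\nabla u|^2$ so that it is subsumed into the right-hand side of (\ref{3.9}).

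The main obstacle I anticipate is the careful bookkeeping in step (iii): the pressure term $\int(\nabla P)\dot{}\cdot\dot u$, after integration by parts, yields several pieces — one involving $\rho\dot\theta$, one involving $\theta\,\textrm{div}(\rho u)=\theta(\rho_t)$ which after a further integration by parts becomes $\rho u\cdot\nabla(\theta\,\textrm{div}\dot u)$-type terms, and one involving $\rho u\cdot\nabla\theta$. Each must be estimated so that only $\epsilon\|\nabla\dot u\|_{L^2}^2$, a constant, a multiple of $\int\rho|\dot\theta|^2$, and terms already controlled (via $\|\nabla u\|_{L^2}$, interpolation, and $\|\nabla u\|_{L^4}^4$) survive; in particular terms like $\int\rho|u||\nabla\theta||\nabla\dot u|$ need $\|\nabla u\|_{L^2}$ and a Gagliardo–Nirenberg/Sobolev bound on $u$ together with the already-available $\sup_t\|\nabla\theta\|_{L^2}$ from Lemma~\ref{lem3.0}, plus Young's inequality, to be absorbed. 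A secondary technical point is verifying the boundary terms vanish: since $u=\dot u=0$ is not literally true on $\partial\Omega$ (only $u=0$ holds, and $u_t=0$ on the boundary, so $\dot u=u_t+u\cdot\nabla u=0$ there as well because $u$ vanishes), the integrations by parts producing no boundary contributions are justified, but this should be noted.
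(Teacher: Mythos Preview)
Your overall strategy---apply the material derivative to (\ref{3.2}), test against $\dot u$, and bound the elliptic commutator terms by $\epsilon\|\nabla\dot u\|_{L^2}^2+C\|\nabla u\|_{L^4}^4$---is precisely the paper's approach (Hoff's method). There is, however, a genuine gap in step (iii). You plan to absorb a residual term of the form $\int\rho|u||\nabla\theta||\nabla\dot u|$ by invoking ``the already-available $\sup_t\|\nabla\theta\|_{L^2}$ from Lemma~\ref{lem3.0}'', but Lemma~\ref{lem3.0} gives only $\int_0^T\|\nabla\theta\|_{L^2}^2\,dt\le C$, not a pointwise-in-time bound; the estimate $\sup_t\|\nabla\theta\|_{L^2}\le C$ is proved only in Lemma~\ref{lem3.3}, which depends on the present lemma. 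Since at this stage no $L^\infty$ control on $u$ is available either, there is no evident way to close the estimate if that term is genuinely present.

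The fix---and what the paper does---is to avoid generating any $\nabla\theta$ term at all. Instead of expanding $P_t=R\rho_t\theta+R\rho\theta_t$, one observes that after integration by parts the two pressure contributions combine into
\[
\int\bigl[P_t+\textrm{div}(Pu)\bigr]\textrm{div}\dot u-\int P\,\nabla u:\nabla\dot u^{\top},
\]
and then the identity (\ref{3.7}), $P_t+\textrm{div}(Pu)=R\rho\dot\theta$, converts the first integral directly into $R\int\rho\dot\theta\,\textrm{div}\dot u$. Both pieces are then bounded by $\epsilon\|\nabla\dot u\|_{L^2}^2+C_\epsilon\int\rho|\dot\theta|^2+C_\epsilon\|\nabla u\|_{L^2}^2$ using only (\ref{3.00}). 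This also makes your step (iv) superfluous: the paper's proof of Lemma~\ref{lem3.2} never invokes Lemma~\ref{lem3.1}; one simply integrates the differential inequality over $[0,t]$ and uses (\ref{3.0}) to control $\int_0^t\|\nabla u\|_{L^2}^2$, obtaining (\ref{3.9}) directly.
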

\begin{proof}
Here the calculations are motivated by Hoff \cite{Hoff1}. Taking the
material derivative to (\ref{3.2}), one deduces that
\begin{eqnarray}\label{3.10}
\nonumber&&\rho \ddot{u}+\nabla P_t+\textrm{div}(\nabla P\otimes
u)\\
&&=\mu[\Delta u_t+\textrm{div}(\Delta u\otimes
u)]+(\lambda+\mu)[\nabla\textrm{div}u_t+\textrm{div}((\nabla\textrm{div}u)\otimes
u)],
\end{eqnarray}
where $\textrm{div}(f\otimes u):=\sum\partial_j(fu^j)$ as in
\cite{Sun2,Sun3}. Taking the $L^2$ inner product of the above
equation with $\dot{u}$, and using $(\ref{1.1})_1$, we arrive at
\begin{eqnarray}\label{3.11}
\nonumber&&\frac{1}{2}\frac{d}{dt}\int\rho |\dot{u}|^2=\int
P_t\textrm{div}\dot{u}-\int\textrm{div}(\nabla P\otimes
u)\dot{u}\\
&&+\int\mu[\Delta u_t+\textrm{div}(\Delta u\otimes
u)]\dot{u}+(\lambda+\mu)\int[\nabla\textrm{div}u_t+\textrm{div}((\nabla\textrm{div}u)\otimes
u)]\dot{u},
\end{eqnarray}
A short computation shows that
\begin{equation}\label{3.12}
\int P_t\textrm{div}\dot{u}-\int\textrm{div}(\nabla P\otimes
u)\dot{u}=\int[P_t+\textrm{div}(Pu)]\textrm{div}\dot{u}-\int P\nabla
u:\nabla\dot{u}^{\top},
\end{equation}
\begin{eqnarray}\label{3.13}
\nonumber&&\int\mu[\Delta u_t+\textrm{div}(\Delta u\otimes
u)]\dot{u}\\
&=&-\mu\int|\nabla\dot{u}|^2+\mu\int[(\nabla u\nabla
u):\nabla\dot{u}+\nabla u:(\nabla
u\nabla\dot{u})-\textrm{div}u\nabla u:\nabla\dot{u}],
\end{eqnarray}
and
\begin{eqnarray}\label{3.14}
\nonumber&&(\lambda+\mu)\int[\nabla\textrm{div}u_t+\textrm{div}((\nabla\textrm{div}u)\otimes
u)]\dot{u}\\
&\nonumber=&-(\lambda+\mu)\int|\textrm{div}\dot{u}|^2+(\lambda+\mu)\int\textrm{div}
u\nabla u:\nabla\dot{u}^{\top}\\
&&+(\lambda+\mu)\int[\nabla u:\nabla
u^{\top}\textrm{div}\dot{u}-(\textrm{div}u)^2\textrm{div}\dot{u}],
\end{eqnarray}
Combining (\ref{3.11})-(\ref{3.14}), and using (\ref{3.7}) and
(\ref{3.00}) once more, we are led to
\begin{eqnarray}\label{3.15}
\nonumber&&\frac{1}{2}\frac{d}{dt}\int\rho
|\dot{u}|^2+\mu\int|\nabla\dot{u}|^2+(\lambda+\mu)\int|\textrm{div}\dot{u}|^2\\
&\nonumber=&R\int\rho\dot{\theta}\textrm{div}\dot{u}-\int P\nabla
u:\nabla\dot{u}^{\top}\\
&&\nonumber+\mu\int[(\nabla u\nabla u):\nabla\dot{u}+\nabla
u:(\nabla u\nabla\dot{u})-\textrm{div}u\nabla u:\nabla\dot{u}]\\
&&\nonumber+(\lambda+\mu)\int[\textrm{div} u\nabla
u:\nabla\dot{u}^{\top}+\nabla u:\nabla
u^{\top}\textrm{div}\dot{u}-(\textrm{div}u)^2\textrm{div}\dot{u}]\\
&\leq&\epsilon\int|\nabla\dot{u}|^2+C_\epsilon\int\sqrt{\rho}\dot{\theta}^2+C_\epsilon\int|\nabla
u|^2+C_\epsilon\int|\nabla u|^4.
\end{eqnarray}
Choosing a $\epsilon$ sufficiently small, the first term on the
right hand side of (\ref{3.15}) can be absorbed by the the left hand
side, and then integrating the resulting equation over $[0, t]$,
using (\ref{3.0}), we complete the proof of Lemma \ref{lem3.2}.
\end{proof}

\begin{lem}[\textbf{Crucial estimates}]\label{lem3.3}Under assumption (\ref{3.00}),  there holds for any $T<T^*$
\begin{equation}\label{3.16}
\sup_{0\leq t\leq T}\int(|\nabla
u|^2+\rho|\dot{u}|^2+|\nabla\theta|^2)+\int_0^T\int|\nabla\dot{u}|^2+\int_0^T\int\rho\dot{\theta}^2\leq
C.
\end{equation}
\end{lem}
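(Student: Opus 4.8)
The plan is to combine the three differential inequalities from Lemmas \ref{lem3.1}--\ref{lem3.2} with a new energy estimate for $\theta$ obtained by testing the temperature equation with $\theta_t$, and then to close everything by controlling the remaining bad term $\int_0^t\int|\nabla u|^4$ via the $u=v+w$ decomposition and Gronwall's inequality. First I would derive the $\theta$-estimate: multiply the temperature equation $(\ref{1.1})_3$ by $\theta_t$ and integrate over $\Omega$. The term $\kappa\int\Delta\theta\,\theta_t$ yields $-\frac{\kappa}{2}\frac{d}{dt}\int|\nabla\theta|^2$; the term $c_\nu\int(\rho\theta_t+\rho u\cdot\nabla\theta)\theta_t$ gives $c_\nu\int\rho\theta_t^2$ plus a lower-order piece that, with (\ref{3.00}) and (\ref{3.0}), is absorbed; and the pressure term $\int P\,\mathrm{div}u\,\theta_t$ is handled by writing $P\,\mathrm{div}u=R\rho\theta\,\mathrm{div}u$ and using Cauchy's inequality against $\int\rho\theta_t^2$ plus $\int|\nabla u|^2$-type terms. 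The crucial trick, as the introduction emphasizes, is the quadratic viscous term $\int[\frac{\mu}{2}|\nabla u+\nabla u^\top|^2+\lambda(\mathrm{div}u)^2]\theta_t$: rewrite it as $\frac{d}{dt}\int[\frac{\mu}{2}|\nabla u+\nabla u^\top|^2+\lambda(\mathrm{div}u)^2]\theta-\int\partial_t[\frac{\mu}{2}|\nabla u+\nabla u^\top|^2+\lambda(\mathrm{div}u)^2]\theta$, and the residual involves $\nabla u:\nabla u_t\cdot\theta$, which after substituting $u_t=\dot u-u\cdot\nabla u$ is bounded by terms like $\int|\nabla u||\nabla\dot u||\theta|+\int|\nabla u|^2|\nabla u||\theta|$, controlled by $\epsilon\int|\nabla\dot u|^2$, $\int|\nabla u|^4$, and lower order. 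Converting from $\theta_t$ to $\dot\theta=\theta_t+u\cdot\nabla\theta$ costs only $\int\rho|u|^2|\nabla\theta|^2$-type terms, again absorbable using (\ref{3.00}).

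Next I would add the three inequalities (\ref{3.1}), (\ref{3.9}) and the new $\theta$-inequality with suitably chosen small coefficients so that the $\int_0^t\int\rho\dot\theta^2$ produced on the right of (\ref{3.1}) (after Cauchy on $\int\sqrt\rho|\dot\theta||\nabla u|$, which gives $\epsilon\int\rho\dot\theta^2+C_\epsilon\int|\nabla u|^2$) and the $C\int_0^t\int\rho\dot\theta^2$ on the right of (\ref{3.9}) are both dominated by the good term $c_\nu\int_0^T\int\rho\dot\theta^2$ coming from the $\theta$-estimate; similarly the $\epsilon\int|\nabla\dot u|^2$ terms are absorbed by $\mu\int_0^T\int|\nabla\dot u|^2$. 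After absorption one is left with a master inequality of the schematic form
\begin{equation*}
\sup_{0\le t\le T}\int(|\nabla u|^2+\rho|\dot u|^2+|\nabla\theta|^2)+\int_0^T\!\!\int(|\nabla\dot u|^2+\rho\dot\theta^2)\le C+C\int_0^T\!\!\int|\nabla u|^4.
\end{equation*}
(One must also make sure the boundary term $\int P\,\mathrm{div}u$ and the term $\int[\frac{\mu}{2}|\nabla u+\nabla u^\top|^2+\lambda(\mathrm{div}u)^2]\theta$ appearing at time $t$ are controlled; the first by $\epsilon\int|\nabla u|^2+C_\epsilon$, the second by $\epsilon\int|\nabla u|^2$ times $\|\theta\|_{L^\infty}\le M$, both absorbable.)

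The main obstacle is then estimating $\int_0^T\int|\nabla u|^4$. Here I would use $u=v+w$ with $v$ solving (\ref{4.1}) and $w$ solving (\ref{4.2}). By Lemma \ref{lem2.0}(1) applied to (\ref{4.1}), $\|v\|_{W^{2,p}}\le C\|\nabla P\|_{L^p}\le C\|\nabla(\rho\theta)\|_{L^p}$; in two dimensions, using the state equation and (\ref{3.00}), one bounds $\|\nabla v\|_{L^4}$ in terms of $\|\nabla\theta\|_{L^2}$, $\|\nabla\rho\|_{L^{q}}$ and lower-order quantities (this is where the gradient estimate of $\rho$ via Lemmas \ref{lem2.3}--\ref{lem2.4} will eventually feed in, though at this stage I would keep $\|\nabla\rho\|$ inside a Gronwall bracket or postpone it to a later lemma). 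For $w$, Lemma \ref{lem2.0}(1) on (\ref{4.2}) gives $\|w\|_{W^{2,2}}\le C\|\rho\dot u\|_{L^2}\le C\|\sqrt\rho\dot u\|_{L^2}$ using (\ref{3.00}); then the Gagliardo--Nirenberg inequality in $\mathbb R^2$, $\|\nabla w\|_{L^4}^4\le C\|\nabla w\|_{L^2}^2\|\nabla^2 w\|_{L^2}^2$, together with $\|\nabla w\|_{L^2}\le\|\nabla u\|_{L^2}+\|\nabla v\|_{L^2}$, converts $\int_0^T\int|\nabla w|^4$ into something like $\int_0^T(\|\nabla u\|_{L^2}^2+\cdots)\|\sqrt\rho\dot u\|_{L^2}^2\,dt$. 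Hence $\int_0^T\int|\nabla u|^4\le C\int_0^T(1+\|\nabla\theta\|_{L^2}^2)\,dt+C\int_0^T(\|\nabla u\|_{L^2}^2)\|\sqrt\rho\dot u\|_{L^2}^2\,dt+(\text{lower order})$, where the first integral is bounded by the very quantity we are estimating. Feeding this back into the master inequality, denoting $\Phi(t)=\int(|\nabla u|^2+\rho|\dot u|^2+|\nabla\theta|^2)$, one obtains $\sup_{[0,T]}\Phi+\int_0^T(\|\nabla\dot u\|_{L^2}^2+\|\rho\dot\theta\|_{L^2}^2)\le C+C\int_0^T(1+\Phi)\Phi\,dt$; since $\int_0^T\Phi\,dt\le C$ by Lemma \ref{lem3.0} (after checking $\int_0^T\|\nabla\theta\|_{L^2}^2$ is already part of (\ref{3.0})), Gronwall's inequality yields $\sup_{[0,T]}\Phi\le C$ uniformly in $T<T^*$, which is exactly (\ref{3.16}). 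The two-dimensional nature is essential in the Gagliardo--Nirenberg step $\|\nabla w\|_{L^4}^4\lesssim\|\nabla w\|_{L^2}^2\|\nabla^2 w\|_{L^2}^2$ and in the $W^{2,p}$ bound for $v$ with $p=4$; in three dimensions the exponents no longer close, as the introduction notes.
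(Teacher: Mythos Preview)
Your overall architecture matches the paper's proof: test the temperature equation with $\theta_t$, rewrite the viscous quadratic term as a total time derivative plus remainders involving $\nabla\dot u$, combine with Lemmas~\ref{lem3.1}--\ref{lem3.2}, and close with the decomposition $u=v+w$ and Gronwall. The $w$-part is handled exactly as you describe.

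There is, however, a genuine gap in your treatment of $\|\nabla v\|_{L^4}$. You invoke Lemma~\ref{lem2.0}(1) on (\ref{4.1}) to get $\|v\|_{W^{2,p}}\le C\|\nabla P\|_{L^p}=C\|\nabla(\rho\theta)\|_{L^p}$, which forces $\|\nabla\rho\|_{L^q}$ into the estimate. But the bound on $\|\nabla\rho\|_{L^q}$ is Lemma~\ref{lem4.2}, whose proof \emph{uses} Lemma~\ref{lem3.3}; so you cannot ``postpone'' it or ``keep it inside a Gronwall bracket'' without circularity. There is no coupled differential inequality for $\nabla\rho$ available at this stage that closes.

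The fix, which the paper uses, is to notice that the right-hand side of (\ref{4.1}) is already in divergence form: $\nabla P=\mathrm{div}(P\,\mathrm{Id})$. Hence Lemma~\ref{lem2.0}(2) applies and gives directly
\[
\|\nabla v\|_{L^4}\le C\|P\|_{L^4}=C\|R\rho\theta\|_{L^4}\le C
\]
from assumption~(\ref{3.00}) alone, with no gradient of $\rho$ needed. With this in hand one gets $\int_0^t\|\nabla v\|_{L^4}^4\le C$ and, combining with the $w$-estimate,
\[
\int_0^t\!\!\int|\nabla u|^4\le C+C\int_0^t\bigl(1+\|\nabla u\|_{L^2}^2\bigr)\|\sqrt{\rho}\,\dot u\|_{L^2}^2,
\]
which feeds into your master inequality and closes by Gronwall exactly as you outline. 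A minor related point: the cross term $\int\rho|u|^2|\nabla\theta|^2$ (or $\int|u|^2|\nabla\theta|^2$) is not ``absorbable using (\ref{3.00})'' alone; the paper bounds it by $\|u\|_{L^\infty}^2\|\nabla\theta\|_{L^2}^2\le C\|\nabla u\|_{L^4}^4+C\|\nabla\theta\|_{L^2}^4$, reducing it again to the $|\nabla u|^4$ term and a quantity controlled by $\Phi$.
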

\begin{proof}
According to $(\ref{1.1})_1$, the thermal energy equation
$(\ref{1.1})_3$ can be rewritten as
\begin{equation}\label{3.17}
c_\nu\rho\dot{\theta}-\kappa\Delta\theta+R\rho\theta\textrm{div}u=\frac{\mu}{2}|\nabla
u+\nabla u^{\top}|^2+\lambda(\textrm{div}u)^2.
\end{equation}
Multiplying the above equation by $\theta_t$, and integrating the
resulting equation over $\Omega$, we obtain
\begin{eqnarray}\label{3.18}
&&\nonumber
c_\nu\int\rho\dot{\theta}^2+\frac{\kappa}{2}\frac{d}{dt}\int|\nabla\theta|^2\\
&=&\nonumber
c_\nu\int\rho\dot{\theta}u\cdot\nabla\theta-R\int\rho\theta\textrm{div}u\dot{\theta}
+R\int\rho\theta\textrm{div}u
u\cdot\nabla\theta\\
&&\nonumber+\frac{d}{dt}\int[\frac{\mu}{2}|\nabla u+\nabla
u^{\top}|^2+\lambda(\textrm{div}u)^2]\theta\\
&&-\int[\mu(\nabla u+\nabla u^{\top}):(\nabla u_t+\nabla
u^{\top}_t)+2\lambda\textrm{div}u\textrm{div}u_t]\theta.
\end{eqnarray}
Note that
\begin{eqnarray}\label{3.19}
&&\nonumber-2\lambda\int\textrm{div}u\textrm{div}u_t\theta\\
&=&\nonumber-2\lambda\int\textrm{div}u\textrm{div}\dot{u}\theta+2\lambda\int\textrm{div}u\textrm{div}(u\cdot\nabla
u)\theta\\
&=&\nonumber -2\lambda\int\textrm{div}u\textrm{div}\dot{u}\theta+2\lambda\int\textrm{div}u\partial_i u^k\partial_k u^i\theta+2\lambda\int\textrm{div}u u^k\partial_k \textrm{div}u \theta \\
&=&\nonumber
-2\lambda\int\textrm{div}u\textrm{div}\dot{u}\theta+2\lambda\int\textrm{div}u\partial_i
u^k\partial_k u^i\theta+\lambda\int\theta
u\cdot\nabla(\textrm{div}u)^2\\
&=&\nonumber-2\lambda\int\textrm{div}u\textrm{div}\dot{u}\theta+2\lambda\int\textrm{div}u\nabla
u:\nabla u^{\top}\theta-\lambda\int\theta
(\textrm{div}u)^3-\lambda\int
u\cdot\nabla\theta (\textrm{div}u)^2,\\
\end{eqnarray}
and
\begin{eqnarray}\label{3.20}
-\mu\int(\nabla u+\nabla u^{\top}):(\nabla u_t+\nabla
u^{\top}_t)\theta&=&\nonumber-2\mu\int\nabla
u:\nabla u_t\theta-2\mu\int\nabla u:\nabla u_t^{\top}\theta\\
&=&I_1+I_2,
\end{eqnarray}
\begin{eqnarray}\label{3.21}
I_1&=&\nonumber-2\mu\int\nabla
u:\nabla\dot{u}\theta+2\mu\int\partial_i u^j\partial_i(u^k\partial_k
u^j)\theta\\
&=&\nonumber-2\mu\int\nabla u:\nabla\dot{u}\theta+2\mu\int\partial_i
u^j\partial_iu^k\partial_k u^j\theta+2\mu\int\partial_i
u^ju^k\partial_k \partial_iu^j\theta\\
&=&\nonumber-2\mu\int\nabla u:\nabla\dot{u}\theta+2\mu\int\nabla
u:(\nabla u\nabla u)\theta+\mu\int\theta u\cdot\nabla|\nabla u|^2\\
&=&\nonumber-2\mu\int\nabla u:\nabla\dot{u}\theta+2\mu\int\nabla
u:(\nabla u\nabla u)\theta-\mu\int\theta \textrm{div}u|\nabla
u|^2-\mu\int u\cdot\nabla\theta|\nabla u|^2,
\end{eqnarray}
\begin{eqnarray}\label{3.22}
I_2&=&\nonumber-2\mu\int\nabla
u:\nabla\dot{u}^{\top}\theta+2\mu\int\partial_i
u^j\partial_j(u^k\partial_k
u^i)\theta\\
&=&\nonumber-2\mu\int\nabla
u:\nabla\dot{u}^{\top}\theta+2\mu\int\partial_i
u^j\partial_ju^k\partial_k u^i\theta\\
&&\nonumber+\mu\int\partial_i u^ju^k\partial_k
\partial_ju^i\theta+\mu\int\partial_j
u^iu^k\partial_k \partial_iu^j\theta\\
&=&\nonumber-2\mu\int\nabla
u:\nabla\dot{u}^{\top}\theta+2\mu\int\nabla
u:(\nabla u\nabla u)^{\top}\theta+\mu\int\theta u^k\partial_k(\partial_iu^j\partial_ju^i)\\
&=&\nonumber-2\mu\int\nabla
u:\nabla\dot{u}^{\top}\theta+2\mu\int\nabla
u:(\nabla u\nabla u)^{\top}\theta\\
&&-\mu\int\theta \textrm{div}u\nabla u:\nabla u^{\top}-\mu\int
u\cdot\nabla\theta\nabla u:\nabla u^{\top}.
\end{eqnarray}
Substituting (\ref{3.19})-(\ref{3.22}) into (\ref{3.18}),  using
Cauchy's inequality and (\ref{3.0}), we get
\begin{eqnarray}\label{3.23}
&&\nonumber
c_\nu\int\rho\dot{\theta}^2+\frac{\kappa}{2}\frac{d}{dt}\int|\nabla\theta|^2\\
&\leq&\nonumber\frac{c_{\nu}}{2}\int\rho\dot{\theta}^2+C_\epsilon\int
|\nabla u|^2+C\int |u|^2|\nabla \theta|^2+\epsilon\int|\nabla\dot
u|^2\\
&&+C\int|\nabla u|^4+\frac{d}{dt}\int[\frac{\mu}{2}|\nabla u+\nabla
u^{\top}|^2+\lambda(\textrm{div}u)^2]\theta.
\end{eqnarray}
Consequently, integrating (\ref{3.23}) over $[0,t]$ and using
(\ref{3.0}) again, we are led to
\begin{eqnarray}\label{3.24}
\nonumber \int_0^t\int\rho\dot{\theta}^2+\int|\nabla\theta|^2 &\leq&
C_\epsilon+C\int_0^t\int |u|^2|\nabla
\theta|^2+\epsilon\int_0^t\int|\nabla\dot
u|^2\\
&&\nonumber+C\int_0^t\int|\nabla u|^4+C\int|\nabla u|^2.
\end{eqnarray}
Substituting (\ref{3.1}) and (\ref{3.9}) into the above inequality,
we have
\begin{eqnarray}\label{3.25}
&&\nonumber
\int_0^t\int\rho\dot{\theta}^2+\int|\nabla\theta|^2\\
&\leq&\nonumber C_\epsilon+C\int_0^t\int |u|^2|\nabla
\theta|^2+\epsilon C\int_0^t\int\rho|\dot{\theta}|^2\\
&&\nonumber+C\int_0^t\int\sqrt{\rho}|\dot{\theta}||\nabla
u|+C\int_0^t\int|\nabla u|^4\\
&\leq&\nonumber C_\epsilon+C\int_0^t\int |u|^2|\nabla
\theta|^2+\epsilon
C\int_0^t\int\rho|\dot{\theta}|^2+C\int_0^t\int|\nabla u|^4,
\end{eqnarray}
where we have used Cauchy's inequality and (\ref{3.0}). Take a
$\epsilon$ sufficiently small, then there holds
\begin{eqnarray}\label{3.26}
\int_0^t\int\rho\dot{\theta}^2+\int|\nabla\theta|^2 &\leq&
C+C\int_0^t\int |u|^2|\nabla \theta|^2+C\int_0^t\int|\nabla u|^4.
\end{eqnarray}
It follows from Lemma \ref{lem2.0}, Gagliardo-Nirenberg' inequality
and (\ref{3.00})  that
\begin{eqnarray}\label{3.27}
\int_0^t\int|\nabla u|^4&\leq&\nonumber \int_0^t\int|\nabla
v|^4+\int_0^t\int|\nabla w|^4\\
&\leq&\nonumber C\int_0^t\int|P|^4+C\int_0^t\|\nabla
w\|_{L^2}^2\|\nabla w\|_{H^1}^2\\
&\leq&\nonumber C+C\int_0^t(\|\nabla u\|_{L^2}^2+\|\nabla v\|_{L^2}^2)\|\sqrt{\rho}\dot{u}\|_{L^2}^2\\
&\leq&\nonumber C+C\int_0^t(\|P\|_{L^2}^2+\|\nabla
u\|_{L^2}^2)\|\sqrt{\rho}\dot{u}\|_{L^2}^2\\
&\leq&C+C\int_0^t(1+\|\nabla
u\|_{L^2}^2)\|\sqrt{\rho}\dot{u}\|_{L^2}^2,
\end{eqnarray}
In particular,
\begin{eqnarray}\label{3.28}
\|\nabla u\|_{L^4}^2&\leq&\nonumber
C\|P\|_{L^4}^2+C(\|P\|_{L^2}+\|\nabla
u\|_{L^2})\|\sqrt{\rho}\dot{u}\|_{L^2}\\
&\leq& C+C(1+\|\nabla u\|_{L^2})\|\sqrt{\rho}\dot{u}\|_{L^2}.
\end{eqnarray}
Moreover,
\begin{eqnarray}\label{3.29}
\int_0^t\int |u|^2|\nabla \theta|^2&\leq&
\int_0^t\|u\|_{L^\infty}^2\|\nabla \theta\|_{L^2}^2 \leq
C\int_0^t\|u\|_{L^\infty}^4+C\int_0^t\|\nabla \theta\|_{L^2}^4.
\end{eqnarray}
By Sobolev imbedding $W^{1,4}\hookrightarrow L^\infty$,
\begin{eqnarray}\label{3.30}
\|u\|_{L^\infty}^4&\leq C\|u\|_{L^4}^4+C\|\nabla u\|_{L^4}^4 \leq
C\|\nabla u\|_{L^4}^4,
\end{eqnarray}
where we have used Poincar\'{e}'s inequality. Then (\ref{3.29}) and
(\ref{3.30}) imply that
\begin{eqnarray}\label{3.31}
\int_0^t\int |u|^2|\nabla \theta|^2&\leq& C\int_0^t(\|\nabla
u\|_{L^4}^4+\|\nabla \theta\|_{L^2}^4).
\end{eqnarray}
Substituting (\ref{3.27}) and (\ref{3.31}) into (\ref{3.26}), we
obtain
\begin{eqnarray}\label{3.32}
&&\nonumber
\int_0^t\int\rho\dot{\theta}^2+\int|\nabla\theta|^2\\
&\leq& C+C\int_0^t\|\nabla \theta\|_{L^2}^4+C\int_0^t(1+\|\nabla
u\|_{L^2}^2)\|\sqrt{\rho}\dot{u}\|_{L^2}^2.
\end{eqnarray}
Now we are in a position to close all the estimates above. Indeed,
$4C\times$(\ref{3.32})+(\ref{3.1})+(\ref{3.9}) implies
\begin{eqnarray}\label{3.33}
&&\nonumber
\int(|\nabla u|^2+\rho|\dot{u}|^2+|\nabla\theta|^2)+\int_0^t\int|\nabla\dot{u}|^2+\int_0^t\int\rho\dot{\theta}^2\\
&\leq&\nonumber C+C\int_0^t\|\nabla
\theta\|_{L^2}^4+C\int_0^t(1+\|\nabla
u\|_{L^2}^2)\|\sqrt{\rho}\dot{u}\|_{L^2}^2\\
&\leq& C+C\int_0^t(1+\|\nabla u\|_{L^2}^2+\|\nabla
\theta\|_{L^2}^2)(\|\nabla
u\|_{L^2}^2+\|\sqrt{\rho}\dot{u}\|_{L^2}^2+\|\nabla
\theta\|_{L^2}^2).
\end{eqnarray}
By virtue of Gronwall's inequality and (\ref{3.0}), we complete the
proof of Lemma \ref{lem3.3}.
\end{proof}
\begin{cor}\label{cor3.4}Under assumption (\ref{3.00}),  there holds for any $T<T^*$
\begin{equation}\label{3.34}
\sup_{0\leq t\leq T}(\|u\|_{L^\infty}+\|\nabla u\|_{L^4})\leq C,
\end{equation}
\begin{equation}\label{3.34+}
\int_0^T\int|\nabla^2\theta|^2\leq C.
\end{equation}
\end{cor}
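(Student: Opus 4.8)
\noindent\textit{Proof proposal for Corollary \ref{cor3.4}.}
The plan is to harvest the uniform bounds just obtained in Lemma \ref{lem3.3} and feed them into the two estimates already recorded during its proof. For the first assertion, I would start from the pointwise-in-time inequality (\ref{3.28}),
$$
\|\nabla u\|_{L^4}^2\leq C+C(1+\|\nabla u\|_{L^2})\|\sqrt{\rho}\dot{u}\|_{L^2}.
$$
By (\ref{3.16}) both $\|\nabla u\|_{L^2}$ and $\|\sqrt{\rho}\dot{u}\|_{L^2}$ are bounded by $C$ uniformly on $[0,T]$, whence $\sup_{0\leq t\leq T}\|\nabla u\|_{L^4}\leq C$. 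The bound on $\|u\|_{L^\infty}$ is then immediate from (\ref{3.30}) (that is, the Sobolev embedding $W^{1,4}\hookrightarrow L^\infty$ together with Poincar\'{e}'s inequality), giving $\|u\|_{L^\infty}^4\leq C\|\nabla u\|_{L^4}^4\leq C$.

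For the second assertion I would view the rewritten thermal energy equation (\ref{3.17}) as an elliptic equation for $\theta$ with the Dirichlet condition (\ref{1.3}):
$$
\kappa\Delta\theta=c_\nu\rho\dot{\theta}+R\rho\theta\,\textrm{div}u-\tfrac{\mu}{2}|\nabla u+\nabla u^{\top}|^2-\lambda(\textrm{div}u)^2\quad\textrm{in }\Omega,\qquad \theta=0\ \textrm{on }\partial\Omega.
$$
The standard $H^2$ elliptic estimate on the smooth bounded domain $\Omega$ gives $\|\nabla^2\theta\|_{L^2}\leq C\|\Delta\theta\|_{L^2}$, so it suffices to control $\int_0^T\|\Delta\theta\|_{L^2}^2$ term by term. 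Since $\rho\leq M$ by (\ref{3.00}), one has $\|\rho\dot\theta\|_{L^2}^2\leq M\int\rho\dot\theta^2$, so $\int_0^T\|\rho\dot\theta\|_{L^2}^2\leq M\int_0^T\int\rho\dot\theta^2\leq C$ by (\ref{3.16}); using $\rho\leq M$, $\theta\leq M$ from (\ref{3.00}) and $\|\nabla u\|_{L^2}\leq C$ one gets $\|\rho\theta\,\textrm{div}u\|_{L^2}^2\leq C\|\nabla u\|_{L^2}^2\leq C$, hence $\int_0^T$ of this is $\leq C$; and the quadratic gradient terms contribute $\int_0^T\||\nabla u|^2\|_{L^2}^2=\int_0^T\|\nabla u\|_{L^4}^4$, which is bounded either by the first part of the corollary or directly by (\ref{3.27}). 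Summing the three contributions and invoking the elliptic estimate yields $\int_0^T\int|\nabla^2\theta|^2\leq C$.

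I do not anticipate a genuine obstacle here; the only points requiring a little care are (i) making sure one uses the $L^\infty$ bound on $\rho$ to upgrade the weighted quantity $\sqrt{\rho}\dot\theta\in L^2_{t,x}$ from Lemma \ref{lem3.3} to $\rho\dot\theta\in L^2_{t,x}$, and (ii) applying the interior-plus-boundary $H^2$ regularity for the Dirichlet Laplacian on the smooth domain $\Omega$ (the requisite regularity $\theta\in L^2(0,T;H^2)$ is already part of the class in Proposition \ref{prop1}, so this is an a priori estimate, not a regularity claim). Everything else is a direct substitution of the bounds from (\ref{3.16}) and (\ref{3.00}).
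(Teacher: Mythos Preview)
Your proposal is correct and follows essentially the same approach as the paper: the authors likewise derive (\ref{3.34}) directly from (\ref{3.16}), (\ref{3.28}) and (\ref{3.30}), and obtain (\ref{3.34+}) by applying standard elliptic regularity to (\ref{3.17}) using (\ref{3.00}), (\ref{3.16}) and (\ref{3.34}). Your write-up simply spells out the details (in particular the upgrade from $\sqrt{\rho}\dot\theta$ to $\rho\dot\theta$ via the $L^\infty$ bound on $\rho$) that the paper leaves implicit.
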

\begin{proof}
Clearly,  (\ref{3.34}) is a direct consequence of (\ref{3.16}),
(\ref{3.28}) and (\ref{3.30}). In view of (\ref{3.16}),
 (\ref{3.34}) and (\ref{3.00}), applying the standard elliptic regularity theory to (\ref{3.17}), then (\ref{3.34+}) follows
 immediately.
\end{proof}
\begin{lem}\label{lem3.5}Under assumption (\ref{3.00}),  there holds for any $T<T^*$
\begin{equation}\label{3.35}
\sup_{0\leq t\leq
T}\int\rho\dot{\theta}^2+\int_0^T\int|\nabla\dot{\theta}|^2\leq C.
\end{equation}
\begin{equation}\label{3.35+}
\sup_{0\leq t\leq T}\|\theta\|_{H^2}\leq C.
\end{equation}
\end{lem}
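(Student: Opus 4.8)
The plan is to obtain the two estimates in Lemma \ref{lem3.5} by differentiating the thermal energy equation in the material-derivative sense, testing against $\dot\theta$, and then closing a Gronwall argument using the bounds already established in Lemma \ref{lem3.3} and Corollary \ref{cor3.4}. First I would apply the operator $\partial_t + \mathrm{div}(\cdot\,u)$ (equivalently, take the material derivative and use $(\ref{1.1})_1$) to the rewritten equation (\ref{3.17}), obtaining an evolution equation for $\dot\theta$ of the schematic form
\begin{equation*}
c_\nu\rho\ddot\theta - \kappa\Delta\dot\theta = \kappa\big[\Delta,\, u\cdot\nabla\big]\theta + \kappa\,\mathrm{div}(\Delta\theta\, u) - \mathrm{div}(\dots) + \big(\text{material derivative of the quadratic and } R\rho\theta\,\mathrm{div}u \text{ terms}\big),
\end{equation*}
where all the commutator and transport-correction terms involve only $\nabla u$, $\nabla^2 u$, $\nabla\theta$, $\nabla^2\theta$ and $\nabla\dot u$ paired against $\dot\theta$ or $\nabla\dot\theta$. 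Then I would take the $L^2$ inner product with $\dot\theta$, integrate by parts in the $\kappa\Delta\dot\theta$ term to produce the good term $\kappa\int|\nabla\dot\theta|^2$, and use $(\ref{1.1})_1$ on the left to get $\tfrac{c_\nu}{2}\tfrac{d}{dt}\int\rho\dot\theta^2 + \kappa\int|\nabla\dot\theta|^2 = \sum R_i$.

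The core of the argument is then to bound each $R_i$ by $\tfrac{\kappa}{2}\int|\nabla\dot\theta|^2 + C\,(1 + \text{integrable-in-}t\text{ quantities})\cdot\int\rho\dot\theta^2 + C\,(\text{integrable-in-}t)$. The terms coming from $R\rho\theta\,\mathrm{div}u$ and its material derivative are controlled using $\|\rho\|_{L^\infty},\|\theta\|_{L^\infty}\le M$, $\|\nabla u\|_{L^4}\le C$ from (\ref{3.34}), the bound $\int_0^T\|\nabla\dot u\|_{L^2}^2\le C$ from (\ref{3.16}), and $\int_0^T\|\nabla^2\theta\|_{L^2}^2\le C$ from (\ref{3.34+}); the quadratic viscous terms differentiated once in $t$ produce factors like $\nabla u:\nabla\dot u$ and $\nabla u:(\nabla u\nabla u)$ times $\dot\theta$, which I would estimate by Hölder with $\|\nabla u\|_{L^4}$, Gagliardo–Nirenberg for $\|\dot\theta\|_{L^4}$ (interpolating $\|\nabla\dot\theta\|_{L^2}$ against $\|\dot\theta\|_{L^2}$, the latter dominated by $\|\sqrt\rho\dot\theta\|_{L^2}/\sqrt{\inf\rho}$—but since we have no lower bound on $\rho$, I would instead use $\|\dot\theta\|_{L^2}\le C(\|\nabla\dot\theta\|_{L^2} + \|\rho^{1/2}\dot\theta\|_{L^2})$ via a Poincaré-type inequality $\|\dot\theta\|_{L^2}\lesssim\|\nabla\dot\theta\|_{L^2}$ once we know $\dot\theta\in H_0^1$, since $\theta_t\in H_0^1$ and $u\cdot\nabla\theta\in H_0^1$). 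The commutator terms $[\Delta, u\cdot\nabla]\theta\sim \nabla u\cdot\nabla^2\theta + \nabla^2 u\cdot\nabla\theta$ tested against $\dot\theta$ are handled with $\|\nabla u\|_{L^4}\le C$, $\int_0^T\|\nabla^2\theta\|_{L^2}^2\le C$, and $\|\nabla^2 u\|_{L^2}\le C(\|\rho\dot u\|_{L^2} + \|\nabla P\|_{L^2})\le C(1 + \|\sqrt\rho\dot u\|_{L^2})$ from Lemma \ref{lem2.0} and (\ref{3.16}). After absorbing $\tfrac{\kappa}{2}\int|\nabla\dot\theta|^2$ into the left side, Gronwall's inequality with the $L^1_t$-integrable coefficients yields (\ref{3.35}).

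For (\ref{3.35+}), I would return to the elliptic equation (\ref{3.17}) written as $-\kappa\Delta\theta = \tfrac{\mu}{2}|\nabla u+\nabla u^\top|^2 + \lambda(\mathrm{div}u)^2 - c_\nu\rho\dot\theta - R\rho\theta\,\mathrm{div}u$, and apply standard $H^2$ elliptic regularity: the right-hand side is bounded in $L^2$ uniformly in $t$ because $\||\nabla u|^2\|_{L^2}\le\|\nabla u\|_{L^4}^2\le C$ by (\ref{3.34}), $\|\rho\dot\theta\|_{L^2}\le\sqrt M\|\sqrt\rho\dot\theta\|_{L^2}\le C$ by (\ref{3.35}), and $\|\rho\theta\,\mathrm{div}u\|_{L^2}\le M^2\|\nabla u\|_{L^2}\le C$. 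Together with $\|\theta\|_{L^2}\le C$ (from $\|\theta\|_{L^\infty}\le M$ and $|\Omega|<\infty$), this gives $\|\theta\|_{H^2}\le C$ uniformly on $[0,T]$.

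I expect the main obstacle to be the terms in which the material derivative falls on the quadratic viscous dissipation, i.e.\ expressions like $\int(\nabla u:\nabla\dot u)\,\dot\theta$ and $\int(\nabla u:(\nabla u\,\nabla u))\,\dot\theta$, together with the transport-correction and commutator terms involving $\nabla^2 u$: controlling these without a lower bound on $\rho$ forces one to bound $\|\dot\theta\|_{L^4}$ purely in terms of $\|\nabla\dot\theta\|_{L^2}$ (Poincaré plus Gagliardo–Nirenberg, using $\dot\theta\in H_0^1$) and $\|\nabla^2 u\|_{L^2}$ in terms of $\|\sqrt\rho\dot u\|_{L^2}$, and then one must check that every resulting coefficient multiplying $\int\rho\dot\theta^2$ is genuinely in $L^1_t(0,T)$—which it is, precisely because of the packaged bounds $\sup_t\|\sqrt\rho\dot u\|_{L^2}^2 + \int_0^T\|\nabla\dot u\|_{L^2}^2\le C$ and $\int_0^T\|\nabla^2\theta\|_{L^2}^2\le C$ secured in Lemma \ref{lem3.3} and Corollary \ref{cor3.4}. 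Getting the $\epsilon$-absorption of $\|\nabla\dot\theta\|_{L^2}^2$ to go through cleanly is the delicate bookkeeping step.
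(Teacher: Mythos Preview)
Your strategy is essentially the paper's: take the material derivative of (\ref{3.17}), test with $\dot\theta$, produce $\tfrac{c_\nu}{2}\tfrac{d}{dt}\int\rho\dot\theta^2+\kappa\int|\nabla\dot\theta|^2$, bound the remainder using (\ref{3.16}), (\ref{3.34}), (\ref{3.34+}), Poincar\'e on $\dot\theta\in H_0^1$, and close by Gronwall; then (\ref{3.35+}) via elliptic regularity on (\ref{3.17}). That part, and the treatment of the quadratic viscous terms and the $R\rho\theta\,\mathrm{div}u$ terms, matches the paper.

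There is, however, one concrete gap. In handling the commutator piece $\nabla^2 u\cdot\nabla\theta$ you invoke
\[
\|\nabla^2 u\|_{L^2}\le C\big(\|\rho\dot u\|_{L^2}+\|\nabla P\|_{L^2}\big)\le C\big(1+\|\sqrt\rho\dot u\|_{L^2}\big).
\]
The second inequality is not available at this point: $\|\nabla P\|_{L^2}\le C(\|\nabla\rho\|_{L^2}+\|\nabla\theta\|_{L^2})$, and $\|\nabla\rho\|_{L^2}$ is only controlled later in Lemma~\ref{lem4.2}. So you cannot bound $\|\nabla^2 u\|_{L^2}$ directly here.

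The paper avoids this entirely by never letting a second derivative of $u$ appear. In (\ref{3.38}) it computes $-\kappa\int(\Delta\theta)\dot{}\,\dot\theta$ by integrating by parts so that the only surviving error terms are $\kappa\int\nabla\dot\theta\,\nabla u\,\nabla\theta$ and $\kappa\int\nabla u:\nabla^2\theta\,\dot\theta$, i.e.\ $\nabla u$ paired with $\nabla^2\theta$ or $\nabla\dot\theta$, both handleable with $\|\nabla u\|_{L^4}\le C$ and $\int_0^T\|\nabla^2\theta\|_{L^2}^2\le C$. Equivalently, in your commutator language, the term $\int(\nabla^2 u\cdot\nabla\theta)\dot\theta$ should be integrated by parts once more to shift the derivative onto $\nabla\theta$ or $\dot\theta$; then everything closes exactly as you outline. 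With that correction your argument goes through and coincides with the paper's proof.
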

\begin{proof}
Taking the material derivative to (\ref{3.17}), we find
\begin{eqnarray}\label{3.36}
&&\nonumber
c_\nu\rho\ddot{\theta}-\kappa(\Delta\theta)\dot{}=(c_{\nu}-R)\rho\dot{\theta}\textrm{div}u+R\rho\theta[(\textrm{div}u)^2+\nabla
u:\nabla
u^{\top}]\\[2mm]
&-&\nonumber R\rho\theta\textrm{div}\dot{u}+[\mu(\nabla u+\nabla
u^{\top}):(\nabla \dot{u}+\nabla
\dot{u}^T)+2\lambda\textrm{div}u\textrm{div}\dot{u}]\\[2mm]
&-&[\mu(\nabla u+\nabla u^{\top}):(\nabla u\nabla u+\nabla
u^{\top}\nabla u^{\top})+2\lambda\textrm{div}u\nabla u:\nabla
u^{\top}],
\end{eqnarray}
where we have used the facts
\begin{equation*}
\dot{\rho}=-\rho\textrm{div}u, \ \ \
(\textrm{div}u)\dot{}=\textrm{div}\dot{u}-\nabla u:\nabla u^{\top}
\end{equation*}
and
\begin{equation*}
(\nabla u)\dot{}=\nabla \dot{u}-\nabla u\nabla u, \ \ \ (\nabla
u^{\top})\dot{}=\nabla \dot{u}^{\top}-\nabla u^{\top}\nabla
u^{\top}.
\end{equation*}
Taking the $L^2$ inner product of the (\ref{3.36}) with
$\dot{\theta}$, and using $(\ref{1.1})_1$, we arrive at
\begin{eqnarray}\label{3.37}
&&\nonumber
\frac{c_\nu}{2}\frac{d}{dt}\int\rho\dot{\theta}^2-\kappa\int(\Delta\theta)\dot{}\dot{\theta}\\
&\leq&\nonumber(c_{\nu}-R)\int\rho\dot{\theta}^2\textrm{div}u+R\int\rho\dot{\theta}\theta[(\textrm{div}u)^2+\nabla u:\nabla u^{\top}]\\
&& -R\int\rho\dot{\theta}\theta\textrm{div}\dot{u}+C\int|\nabla
u||\nabla \dot{u}||\dot{\theta}|+C\int|\nabla u|^3|\dot{\theta}|.
\end{eqnarray}
Note that
\begin{eqnarray}\label{3.38}
&&\nonumber-\kappa\int(\Delta\theta)\dot{}\dot{\theta}=-\kappa\int(\Delta\theta_t+u\cdot\nabla\Delta\theta)\dot{\theta}\\
&=&\nonumber\kappa\int\nabla \theta_t\nabla \dot{\theta}+\kappa\int\partial_i u^k\partial_{ki}\theta\dot{\theta}+\kappa\int u^k\partial_{ki}\theta\partial_i\dot{\theta}\\
&=&\nonumber\kappa\int |\nabla \dot{\theta}|^2-\kappa\int\nabla(u\cdot\nabla\theta)\cdot\nabla\dot{\theta}+\kappa\int\partial_i u^k\partial_{ki}\theta\dot{\theta}+\kappa\int u^k\partial_{ki}\theta\partial_i\dot{\theta}\\
&=&\kappa\int |\nabla
\dot{\theta}|^2-\kappa\int\nabla\dot{\theta}\nabla
u\nabla\theta+\kappa\int\nabla u:\nabla^2\theta\dot{\theta}.
\end{eqnarray}
We infer from (\ref{3.37}), (\ref{3.38}) and (\ref{3.00}) that
\begin{eqnarray}\label{3.39}
\nonumber
&&\frac{c_\nu}{2}\frac{d}{dt}\int\rho\dot{\theta}^2+\kappa\int
|\nabla
\dot{\theta}|^2\\
&\leq&\nonumber C\int\rho\dot{\theta}^2|\nabla
u|+C\int\rho|\dot{\theta}||\nabla
u|^2+C\int\rho|\dot{\theta}||\nabla\dot{u}|+C\int|\nabla u||\nabla
\dot{u}||\dot{\theta}|\\
&&\nonumber +C\int|\nabla
u|^3|\dot{\theta}|+\kappa\int|\nabla\dot{\theta}||\nabla
u||\nabla\theta|+\kappa\int|\nabla
u||\nabla^2\theta||\dot{\theta}|\\
&\leq&\nonumber\frac{\kappa}{2}\int|\nabla\dot{\theta}|^2+\epsilon\int|\nabla
u|^2|\dot{\theta}|^2+C_{\epsilon}\int\rho\dot{\theta}^2+C_\epsilon\int|\nabla
u|^4\\
&&+C_\epsilon\int|\nabla
\dot{u}|^2+C_{\epsilon}\int|\nabla^2\theta|^2+C\int|\nabla
u|^2|\nabla \theta|^2,
\end{eqnarray}
From (\ref{3.34}), Gagliardo-Nirenberg' inequality and
Poincar\'{e}'s inequalty, we have
\begin{eqnarray}\label{3.40}
\epsilon\int|\nabla u|^2|\dot{\theta}|^2&\leq&\epsilon\|\nabla
u\|_{L^4}^2\|\dot{\theta}\|_{L^4}^2\leq \epsilon
C\|\dot{\theta}\|_{L^2}\|\nabla\dot{\theta}\|_{L^2}\leq
C\|\nabla\dot{\theta}\|_{L^2}^2,
\end{eqnarray}
and
\begin{eqnarray}\label{3.41}
\nonumber\int|\nabla u|^2|\nabla\theta|^2&\leq&\|\nabla
u\|_{L^4}^2\|\nabla\theta\|_{L^4}^2\leq
C\|\nabla\theta\|_{L^2}\|\nabla\theta\|_{H^1}\\
&\leq&C\|\nabla\theta\|_{L^2}^2+C\|\nabla^2\theta\|_{L^2}^2\leq
C+C\|\nabla^2\theta\|_{L^2}^2,
\end{eqnarray}
where we have used (\ref{3.16}) in the last inequality above.
Substituting (\ref{3.40}) and (\ref{3.41}) into (\ref{3.39}),
choosing a $\epsilon$ small enough and using (\ref{3.34}), we obtian
\begin{eqnarray}\label{3.42}
\nonumber &&c_\nu\frac{d}{dt}\int\rho\dot{\theta}^2+\kappa\int
|\nabla \dot{\theta}|^2 \leq C+C\int|\nabla^2\theta|^2+C\int|\nabla
\dot{u}|^2+C\int\rho\dot{\theta}^2.
\end{eqnarray}
Integrating the above equation over $[0,t]$, and using (\ref{3.16})
and (\ref{3.34+}) once more, we get (\ref{3.35}). Finally,
(\ref{3.35+}) follows from (\ref{3.17}), (\ref{3.16}), (\ref{3.34}),
(\ref{3.35})
 and (\ref{3.00}) immediately. This completes the proof of Lemma \ref{lem3.5}.
\end{proof}
\section{Improved regularity of the density and velocity}
Generally speaking, for the continuity equation $(\ref{1.1})_1$, the
gradient estimate on $\rho$ rely on the boundedness of
$\int_0^T\|\nabla u\|_{L^\infty}$. On the other hand, due to the
lower regularity on $\rho$, we can not obtain the higher order
regularity of $u$ through the momentum equation (\ref{3.2}).
Nevertheless, with the help of the decomposition $u=v+w$, we can
close the gradient estimate of $\rho$ based on a logarithmic
estimate for the strongly elliptic operator
$\mu\Delta+(\lambda+\mu)\nabla\textrm{div}$.

We first give some higher regularity on $w$ below.
\begin{lem}\label{lem4.1}
Under assumption (\ref{3.00}),  there holds for any $T<T^*$
\begin{equation}\label{4.4}
\sup_{0\leq t\leq T}\|w\|_{H^2}+\int_0^T(\|\nabla^2
w\|_{L^q}^2+\|\nabla w\|_{L^\infty}^2)\leq C,\ \ q\in(2,\infty).
\end{equation}
\end{lem}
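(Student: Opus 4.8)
The plan is to read everything off from the elliptic estimate of Lemma~\ref{lem2.0} applied to the system (\ref{4.2}), feeding in the bounds already established in Lemma~\ref{lem3.3}.

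First I would bound $\sup_{0\le t\le T}\|w\|_{H^2}$. Applying the estimate (\ref{2.01}) with $p=2$ to (\ref{4.2}) gives $\|w\|_{H^2}\le C\|\rho\dot u\|_{L^2}$. Writing $\rho^2=\rho\cdot\rho$ we get $\|\rho\dot u\|_{L^2}^2=\int\rho^2|\dot u|^2\le\|\rho\|_{L^\infty}\int\rho|\dot u|^2$, so assumption (\ref{3.00}) together with the time-uniform bound on $\int\rho|\dot u|^2$ from (\ref{3.16}) yields $\sup_{0\le t\le T}\|w\|_{H^2}\le C$. The splitting of $\rho^2$ here is the essential point: only $\sqrt\rho\,\dot u$, not $\dot u$ itself, is controlled uniformly in $t$.

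Next, for a fixed $q\in(2,\infty)$, I would apply (\ref{2.01}) with $p=q$ to obtain $\|w\|_{W^{2,q}}\le C\|\rho\dot u\|_{L^q}\le C\|\rho\|_{L^\infty}\|\dot u\|_{L^q}$. Since $u$ and $u_t$ vanish on $\partial\Omega$ (and $u\cdot\nabla u$ does too, because $u|_{\partial\Omega}=0$) one checks $\dot u\in H_0^1$, so the two-dimensional Sobolev embedding $H_0^1\hookrightarrow L^q$ together with Poincar\'e's inequality gives $\|\dot u\|_{L^q}\le C\|\nabla\dot u\|_{L^2}$. Hence $\|w\|_{W^{2,q}}\le C\|\nabla\dot u\|_{L^2}$ by (\ref{3.00}), and since $q>2$ the Morrey embedding $W^{2,q}\hookrightarrow W^{1,\infty}$ in $\R^2$ gives $\|\nabla w\|_{L^\infty}\le C\|w\|_{W^{2,q}}\le C\|\nabla\dot u\|_{L^2}$ as well. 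Squaring and integrating over $[0,T]$, both $\int_0^T\|\nabla^2 w\|_{L^q}^2$ and $\int_0^T\|\nabla w\|_{L^\infty}^2$ are bounded by $C\int_0^T\|\nabla\dot u\|_{L^2}^2\le C$, again by (\ref{3.16}).

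This argument is essentially routine once Lemma~\ref{lem3.3} is available; the only points that need care are the $\rho=\sqrt\rho\cdot\sqrt\rho$ splitting in the $H^2$ estimate (so as to use the time-uniform control of $\sqrt\rho\,\dot u$ rather than the merely $L^2$-in-time control of $\nabla\dot u$) and the verification that $\dot u\in H_0^1$, which legitimizes the use of Poincar\'e's inequality and the two-dimensional Sobolev embeddings. I do not anticipate any substantial obstacle here — the real work was already done in Section~3.
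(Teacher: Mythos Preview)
Your proposal is correct and follows essentially the same route as the paper: apply the elliptic estimate (\ref{2.01}) to (\ref{4.2}) at $p=2$ for the uniform $H^2$ bound (using the $\sqrt\rho\cdot\sqrt\rho$ splitting and (\ref{3.16})), then at $p=q$ for the $W^{2,q}$ bound, pass to $\|\nabla w\|_{L^\infty}$ by Sobolev embedding, and close with $\int_0^T\|\nabla\dot u\|_{L^2}^2\le C$ from (\ref{3.16}). The only cosmetic differences are that the paper writes the $L^q$ bound on $\dot u$ via Gagliardo--Nirenberg plus Poincar\'e rather than directly citing $H_0^1\hookrightarrow L^q$, and it applies $W^{1,q}\hookrightarrow L^\infty$ to $\nabla w$ (absorbing the lower-order term by the already-established $\|w\|_{H^2}$ bound) rather than writing $W^{2,q}\hookrightarrow W^{1,\infty}$ in one step; these are equivalent.
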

\begin{proof}
Using (\ref{2.01}) with $U=w, F=\rho\dot{u}$ and $p=2$ , we have
\begin{equation}\label{4.5}
\| w\|_{H^2}\leq C \|\rho\dot{u}\|_{L^2}\leq
C\|\sqrt{\rho}\dot{u}\|_{L^2},
\end{equation}
where we have used (\ref{3.00}). Then it follows from (\ref{3.16})
that
\begin{equation}\label{4.7}
\sup_{0\leq t\leq T}\|w\|_{H^2}\leq C.
\end{equation}
For $q>2$, using (\ref{2.01}), (\ref{3.00}) again, and by virtue of
Gagliardo-Nirenberg inequality, we find
\begin{eqnarray}\label{4.8}
\nonumber&&\|\nabla^2 w\|_{L^q}\leq C \|\rho\dot{u}\|_{L^q}\leq
C\|\dot{u}\|_{L^q}\\[2mm]
\nonumber
&\leq&C\|\dot{u}\|_{L^2}^{\frac{2}{q}}\|\nabla\dot{u}\|_{L^2}^{\frac{q-2}{q}}\leq
C\|\nabla\dot{u}\|_{L^2},
\end{eqnarray}
where we have used Poincar\'{e}'s inequality. Next, we use the
Sobolev imbeding $W^{1,q}\hookrightarrow L^\infty$ for $q>2$ to get
\begin{eqnarray}\label{4.9}
\|\nabla w\|_{L^\infty}\leq C (\|\nabla w\|_{L^q}+\|\nabla^2
w\|_{L^q})\leq C(\|\nabla w\|_{H^1}+\|\nabla^2 w\|_{L^q}),
\end{eqnarray}
then we infer from (\ref{3.16}), (\ref{4.7}) and (\ref{4.9}) that
\begin{equation}\label{4.10}
\int_0^T(\|\nabla^2 w\|_{L^q}^2+\|\nabla w\|_{L^\infty}^2)\leq C.
\end{equation}
This completes the proof of Lemma \ref{lem4.1}.
\end{proof}
Thanks to all the estimates obtained above, we will get the gradient
estimates of the density $\rho$ next.
\begin{lem}\label{lem4.2}Under assumption (\ref{3.00}),  there holds for any $T<T^*$
\begin{equation}\label{4.11}
\sup_{0\leq t\leq T}(\|\rho\|_{W^{1,
q}}+\|u\|_{H^2})+\int_0^T\|\nabla u\|_{L^\infty}^2\leq C.
\end{equation}
\end{lem}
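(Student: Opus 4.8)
The plan is to use the decomposition $u=v+w$ together with the logarithmic estimate of Lemma \ref{lem2.3}–\ref{lem2.4} to close a Gronwall-type inequality for $\|\nabla\rho\|_{L^q}$. First I would apply $\partial_i$ to the continuity equation $(\ref{1.1})_1$, multiply by $|\partial_i\rho|^{q-2}\partial_i\rho$ and sum over $i$; integrating by parts and using that $u$ vanishes on $\partial\Omega$ gives the standard differential inequality
\begin{equation*}
\frac{d}{dt}\|\nabla\rho\|_{L^q}\leq C\|\nabla u\|_{L^\infty}\|\nabla\rho\|_{L^q}+C\|\nabla^2 u\|_{L^q}.
\end{equation*}
For the forcing term $\|\nabla^2 u\|_{L^q}\leq\|\nabla^2 v\|_{L^q}+\|\nabla^2 w\|_{L^q}$: the $w$-part is controlled by $\|\nabla\dot u\|_{L^2}$ via Lemma \ref{lem4.1}, hence lies in $L^2(0,T)$; for the $v$-part, apply (\ref{2.01}) to (\ref{4.1}) to get $\|\nabla^2 v\|_{L^q}\leq C\|\nabla P\|_{L^q}\leq C(\|\theta\|_{L^\infty}\|\nabla\rho\|_{L^q}+\|\rho\|_{L^\infty}\|\nabla\theta\|_{L^q})$, and $\|\nabla\theta\|_{L^q}$ is bounded by interpolation between $\|\nabla\theta\|_{L^2}$ and $\|\theta\|_{H^2}$, both controlled by Lemma \ref{lem3.3} and (\ref{3.35+}). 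So $\|\nabla^2 u\|_{L^q}\leq C(1+\|\nabla\rho\|_{L^q})$ with an $L^2$-in-time remainder.

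The crux is estimating $\|\nabla u\|_{L^\infty}$, which is \emph{not} directly bounded. Here I would write $\|\nabla u\|_{L^\infty}\leq\|\nabla v\|_{L^\infty}+\|\nabla w\|_{L^\infty}$; the $w$-part is in $L^2(0,T)$ by (\ref{4.4}). For the $v$-part, use the John–Nirenberg/BMO estimate: by Lemma \ref{lem2.3}, $\|\nabla v\|_{BMO}\leq C(\|P\|_{L^\infty}+\|P\|_{L^2})\leq C$ under (\ref{3.00}). Then the Brezis–Wainger inequality (Lemma \ref{lem2.4}) applied to $v\in W^{1,q}$ gives
\begin{equation*}
\|\nabla v\|_{L^\infty}\leq C\bigl(1+\|\nabla v\|_{BMO}\ln(e+\|\nabla v\|_{W^{1,q}})\bigr)\leq C\bigl(1+\ln(e+\|\nabla^2 v\|_{L^q})\bigr)\leq C\bigl(1+\ln(e+\|\nabla\rho\|_{L^q})\bigr),
\end{equation*}
using the previous paragraph's bound on $\|\nabla^2 v\|_{L^q}$. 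Substituting back, and setting $f(t)=e+\|\nabla\rho(t)\|_{L^q}$, one obtains
\begin{equation*}
f'(t)\leq C\bigl(g(t)+1+\ln f(t)\bigr)f(t),
\end{equation*}
where $g(t)=\|\nabla w\|_{L^\infty}^2+\|\nabla^2 w\|_{L^q}^2\in L^1(0,T)$. A logarithmic Gronwall argument (Gronwall applied to $\ln f$) then yields $\sup_{[0,T]}\|\nabla\rho\|_{L^q}\leq C$.

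With $\|\nabla\rho\|_{L^q}$ bounded, the remaining claims follow: $\int_0^T\|\nabla u\|_{L^\infty}^2\leq C$ from the displayed bound on $\|\nabla v\|_{L^\infty}$ plus (\ref{4.4}); and $\|u\|_{H^2}\leq\|v\|_{H^2}+\|w\|_{H^2}\leq C(\|\nabla P\|_{L^2}+\|\sqrt\rho\dot u\|_{L^2})\leq C$ using (\ref{2.01}), the bound on $\|\nabla\rho\|_{L^q}$ (hence $\|\nabla\rho\|_{L^2}$), (\ref{3.35+}), and Lemma \ref{lem3.3}. The main obstacle is precisely the logarithmic coupling between $\|\nabla u\|_{L^\infty}$ and $\|\nabla^2 v\|_{L^q}$ through $\|\nabla\rho\|_{L^q}$: it is essential that the BMO-norm of $\nabla v$ stays bounded (which is why only $\|\rho\|_{L^\infty},\|\theta\|_{L^\infty}$ appear in the blow-up criterion) so that the feedback is merely logarithmic and Gronwall still closes.
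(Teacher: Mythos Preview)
Your proposal is correct and follows essentially the same route as the paper: derive the transport inequality for $\|\nabla\rho\|_{L^q}$, split $u=v+w$, control $\|\nabla^2 v\|_{L^q}$ via $\|\nabla P\|_{L^q}$ and $\|\nabla v\|_{L^\infty}$ via the BMO estimate (Lemma~\ref{lem2.3}) combined with the Brezis--Wainger inequality (Lemma~\ref{lem2.4}), then close by a logarithmic Gronwall argument and read off the $H^2$ and $L^\infty$ bounds on $u$. The only cosmetic slip is that Lemma~\ref{lem2.4} is applied to $f=\nabla v$ (not to $v$), which your displayed inequality already reflects.
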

\begin{proof}
The proof follows the ideas of Sun, Wang and Zhang
\cite{Sun1,Sun2,Sun3}, we sketch it here for completeness. First of
all, take $\nabla$ to the continuity equation $(\ref{1.1})_1$ to
find
\begin{equation}\label{4.12}
\partial_t
\nabla\rho+(u\cdot\nabla)\nabla\rho+\nabla
u\nabla\rho+\textrm{div}u\nabla\rho+\rho\nabla\textrm{div}u=0.
\end{equation}
Multiplying (\ref{4.12}) by $q|\nabla\rho|^{q-2}\nabla\rho$  and
integrating over $\Omega$, we obtain
\begin{eqnarray}\label{4.13}
\nonumber\frac{d}{dt}\|\nabla\rho\|_{L^q}&\leq& C\|\nabla
u\|_{L^\infty}\|\nabla \rho\|_{L^q}+C\|\nabla^2u\|_{L^q}\\[2mm]
\nonumber&\leq& C(\|\nabla v\|_{L^\infty}+\|\nabla
w\|_{L^\infty})\|\nabla
\rho\|_{L^q}+C(\|\nabla^2v\|_{L^q}+\|\nabla^2w\|_{L^q})\\[2mm]
\nonumber&\leq& C(1+\|\nabla v\|_{L^\infty}+\|\nabla
w\|_{L^\infty})\|\nabla
\rho\|_{L^q}+C(\|\nabla^2w\|_{L^q}+\|\nabla\theta\|_{L^q}),\\
\end{eqnarray}
where we have used (\ref{2.01}). To close (\ref{4.13}), we have to
bound $\|\nabla v\|_{L^\infty}$, and it is just this term leads us
to show the endpoint estimate for the strongly elliptic operator
$\mu\Delta+(\lambda+\mu)\nabla\textrm{div}$. In fact, Lemma
\ref{lem2.0}-\ref{lem2.4} imply that if $q>2$
\begin{eqnarray}\label{4.14}
\nonumber\|\nabla v\|_{L^\infty}&\leq& C(1+\|\nabla
v\|_{BMO}\ln(e+\|\nabla^2 v\|_{L^q}))\\[2mm]
\nonumber&\leq&C(1+(\|P\|_{L^\infty}+\|P\|_{L^2})\ln(e+\|\nabla^2
v\|_{L^q}))\\[2mm]
&\nonumber\leq&C(1+\ln(e+\|\nabla \rho\|_{L^q})+\ln(e+\|\nabla
\theta\|_{L^q}))\\[2mm]
&\leq&C(1+\|\nabla \theta\|_{L^q}+\ln(e+\|\nabla \rho\|_{L^q})).
\end{eqnarray}
Substituting (\ref{4.14}) into (\ref{4.13}), we get
\begin{eqnarray}\label{4.15}
\nonumber\frac{d}{dt}(e+\|\nabla\rho\|_{L^q})&\leq& C(1+\|\nabla
\theta\|_{L^q}+\|\nabla w\|_{L^\infty})\|\nabla
\rho\|_{L^q}+C\ln(e+\|\nabla \rho\|_{L^q})\|\nabla
\rho\|_{L^q}\\[2mm]
&& +C(\|\nabla^2w\|_{L^q}+\|\nabla\theta\|_{L^q}),
\end{eqnarray}
then using (\ref{4.4}), (\ref{3.35+}) and Gronwall's inequality, one
deduces that
\begin{equation}\label{4.16}
\sup_{0\leq t\leq T}\|\nabla \rho\|_{L^q}\leq C.
\end{equation}
From (\ref{3.35+}), (\ref{4.4}), (\ref{4.16}) and Lemma
\ref{lem2.0}, there holds for $q>2$
\begin{eqnarray}\label{4.17}
\nonumber\int_0^T\|\nabla u\|_{L^\infty}^2&\leq& C\int_0^T\|\nabla
v\|_{L^\infty}^2+\int_0^T\|\nabla w\|_{L^\infty}^2\\
&\leq& C+C\int_0^T\|\nabla v\|_{L^q}^2+C\int_0^T\|\nabla^2
v\|_{L^q}^2\leq C.
\end{eqnarray}
Moreover, it follows from (\ref{4.4}), (\ref{3.35+}) and
(\ref{4.16})
\begin{equation}\label{4.18}
\|\nabla^2 u\|_{L^2}\leq \|\nabla^2 w\|_{L^2}+\|\nabla^2
v\|_{L^2}\leq C+C\|\nabla \rho\|_{L^2}\leq C.
\end{equation}
This completes the proof of Lemma \ref{lem4.2}.
\end{proof}
\section{Proof of Theorem \ref{thmBC}}
The combination of Lemma \ref{lem3.3}, Lemma \ref{lem3.5} and Lemma
\ref{lem4.2} will enable us to extend the strong solution $(\rho, u,
\theta)$ beyond the maximal existence time $T^*$. Indeed, by virtue
of (\ref{4.11}), (\ref{3.35+}) and time continuity stated in
(\ref{pro3}), we can define
\begin{equation}\label{5.1}
(\rho, u, \theta)|_{t=T^*}=\lim_{t\rightarrow T^*}(\rho, u, \theta),
\end{equation}
and
\begin{equation}\label{5.2}
h:=\rho\dot{u}|_{t=T^*}=\lim_{t\rightarrow T^*}(\mu\Delta
u+(\lambda+\mu)\nabla\textrm{div}u-\nabla P)\quad \textrm{strongly
in}\  L^2.
\end{equation}
On the other hand, using Sobolev imbedding $W^{1, q}\hookrightarrow
C^{0,1-\frac{2}{q}}$ for $q>2$, (\ref{4.11}) and the time continuity
on $\rho$ stated in (\ref{pro3}), one easily deduces that
\begin{equation*}
\rho|_{t=T^*}(x)=\lim_{t\rightarrow T^*}\rho(x) \quad
\textrm{uniformly in}\ \Omega,
\end{equation*}
and hence
\begin{equation}\label{5.3}
\sqrt{\rho}|_{t=T^*}(x)=\lim_{t\rightarrow T^*}\sqrt{\rho}(x) \quad
\textrm{pointwise in}\ \Omega,
\end{equation}
then (\ref{5.3}), the assumption (\ref{3.00}) that $\rho$ is upper
bounded and Lebesgue's Dominated Convergence Theorem imply that
\begin{equation}\label{5.4}
\sqrt{\rho}|_{t=T^*}=\lim_{t\rightarrow T^*}\sqrt{\rho} \quad
\textrm{strongly in}\ L^2.
\end{equation}
Besides, we infer from $(\ref{3.16})$ that there exists a sequence
$t_k\rightarrow T^*$ as $k\rightarrow\infty$ and a function
$\tilde{g}_1\in L^2$, such that
\begin{equation}\label{5.5}
\tilde{g}_1=\lim_{k\rightarrow \infty}(\sqrt{\rho}\dot{u})(t_k)
\quad \textrm{weakly in}\ L^2.
\end{equation}
It follows from (\ref{5.4}) and (\ref{5.5}) that
\begin{equation}\label{5.6}
\sqrt{\rho}|_{t=T^*}\tilde{g}_1=\lim_{k\rightarrow
\infty}(\rho\dot{u})(t_k) \quad \textrm{ in\ the\ sence\ of\
distribution}.
\end{equation}
Comparing (\ref{5.2}) with (\ref{5.6}), we obtain
\begin{equation*}
h=\sqrt{\rho}|_{t=T^*}\tilde{g}_1,
\end{equation*}
i.e.
\begin{equation}\label{5.7}
(\mu\Delta u+(\lambda+\mu)\nabla\textrm{div}u-\nabla
P)|_{t=T^*}=\sqrt{\rho}|_{t=T^*}\tilde{g}_1.
\end{equation}
Similarly, there exists a function $\tilde{g}_2\in L^2$ such that
\begin{equation}\label{5.8}
(\kappa\Delta\theta+\frac{\mu}{2}|\nabla u+\nabla
u^{\top}|^2+\lambda(\textrm{div}u)^2)|_{t=T^*}=\sqrt{\rho}|_{t=T^*}\tilde{g}_2.
\end{equation}
Now (\ref{5.1}), (\ref{5.7}) and (\ref{5.8}) assure that we can take
$(\rho, u, \theta)|_{t=T^*}$ as the initial data and apply
Proposition \ref{prop1} to extend the local strong solution beyond
$T^*$, which contradicts the maximality of $T^*$. This completes the
proof of Theorem \ref{thmBC}. {\hfill $\square$\medskip}

\bigbreak\noindent{ \large\bf Acknowledgment.} The second author
would like to thank Professor Zhifei Zhang and Dr. Huanyao Wen for
their sincere help.

\end{document}